\newcommand{\by}{\hspace{0.02in}{\scriptstyle{\square}}\hspace{0.02in}}
\newtheorem{theorem}{Theorem}
\newtheorem{lemma}[theorem]{Lemma}
\newtheorem{definition}[theorem]{Definition}
\newtheorem{corollary}[theorem]{Corollary}
\newcommand{\slceil}{\lceil\hspace{-0.013in}}
\newcommand{\srceil}{\hspace{-0.013in}\rceil}
\newcommand{\slfloor}{\lfloor\hspace{-0.013in}}
\newcommand{\srfloor}{\hspace{-0.013in}\rfloor}
\newcommand{\qed}{\ \hfill \rule{1ex}{1ex}\\} 
\newenvironment{proof}{{\noindent \bf Proof}: }{\qed}
\begin{document}

\author{N.E. Clarke\thanks{Department of Mathematics and Statistics, Acadia University; Support by NSERC 2015-06258}
  \and M.E. Messinger\thanks{Department of Mathematics and Computer Science, Mount Allison University; Supported by NSERC 256119-2011}
  \and G. Power\thanks{Department of Mathematics and Computer Science, Mount Allison University; Supported by Mount Allison University and NSERC USRA 2015}}
\title{Bounding the search number of graph products}

\maketitle



\begin{abstract} In this paper, we provide results for the search number of the Cartesian product of graphs.  We consider graphs on opposing ends of the spectrum: paths and cliques.  Our main result determines the pathwidth of the product of cliques and provides a lower bound for the search number of the product of cliques.  A consequence of this result is a bound for the search number of arbitrary graphs $G$ and $H$ based on their respective clique numbers.\end{abstract}

keywords: graph searching, sweeping, pathwidth

\section{Introduction}\label{sec:intro}

Imagine that a security system has indicated the existence of a camouflaged, mobile intruder in some physical or computer network. How can a set of guards, or  \emph{searchers}, locate this intruder?  Such a question can be considered using a \emph{graph searching} model. In this type of model, an intruder can, at any time, move infinitely fast from vertex $u$ to vertex $v$ along any path that contains no searchers.  To search a graph, it is necessary to formulate and execute a \emph{search strategy}: a sequence of actions designed so that, upon their completion, all edges (and therefore vertices) of the graph have been \emph{cleared} of the invisible intruder.  In such strategies, three actions are permitted and each action may occur multiple times:

{\tiny{$\bullet$}} place a searcher on a vertex; 

{\tiny{$\bullet$}} move a single searcher along an edge $uv$, starting at $u$ and ending at $v$;

{\tiny{$\bullet$}} remove a searcher from a vertex.\vspace{0.05in}

An edge $uv$ can be cleared of the invisible intruder in one of two ways:   (i) at least two searchers are located at vertex $u$ and one of these searchers traverses $uv$ to vertex $v$; (ii) at least one searcher is located at $u$, all edges incident with $u$, other than $uv$, have already been cleared of the intruder, and the searcher traverses the edge $uv$ to vertex $v$.  Naturally, the fundamental question is: what is the fewest number of searchers for which a search strategy exists?  Using the terminology of \cite{YDA}, we call this parameter the \emph{search number} of $G$ and denote it $s(G)$.   This parameter has also been referred to as the edge-search number $es(G)$ (see \cite{GM}, for example) and the sweep number $sw(G)$ (see \cite{Alspach}, for example).  In the literature, searching has been related to pebbling and thus to computer memory usage; it also has applications to assuring privacy when using bugged channels, to VLSI circuit design, and to clearing networks with brushes (see~\cite{Alspach,FL,FGY,KP,MNP,YDA}). The field of graph searching is rapidly expanding and in recent years new models, motivated by applications and foundational issues in computer science, have appeared.

Although the associated decision problem is NP-complete (see~\cite{MHGJP}), the search number is known for many classes of graphs and bounds exist for graphs with particular properties (see  \cite{Alspach,Thesis,YDA}, for example). However, very little is known about the search number of Cartesian products.  \cite{Tosic} provided an upper bound for the search number of the Cartesian product of graphs $G$ and $H$ based on the respective cardinalities and search numbers of $G$ and $H$. \cite{Kinnersley} showed $pw(G) = vs(G)$, where $pw(G)$ denotes the pathwidth (defined below) and $vs(G)$ denotes the vertex separation number of a graph $G$. \cite{JAEllis} showed $vs(G) \leq s(G) \leq vs(G)+1$.  For the Cartesian product $G \by H$ of $G$ and $H$, these results imply \begin{equation}\label{exx}pw(G \by H) \leq s(G \by H) \leq pw(G \by H)+2.\end{equation}  However, as the associated decision problem for pathwidth is NP-complete, the lower bound is not necessarily useful in practice.  

In this paper, we consider input graphs at opposing ends of the spectrum: paths and cliques.  In Section~\ref{sec:path}, we determine $s(P_m \by P_n)$ and $s(K_m \by P_n)$.  In Section~\ref{sec:pw}, we determine $pw(K_m \by K_n)$ and exploit the relationship between the search number and pathwidth to show \begin{equation}\label{ee}s(G \by H) \geq s(K_m \by K_n) \geq pw(K_m \by K_n)  =\begin{cases} \frac{m}{2}n + \frac{m}{2} -1 & \text{\emph{if $m$ even}} \\ \lceil \frac{m}{2} \rceil n -1 & \text{\emph{if $m$ odd}}\end{cases}\end{equation} where $m$, $n$ are the clique numbers of $G$, $H$, respectively.  Inequality (\ref{ee}) is given by Corollary~\ref{end} and results from applying Corollary~\ref{cor:1}, Lemma~~\ref{lem:x}, and Corollary~\ref{cor:paths}.  

To conclude this section, we define the pathwidth  of a graph $G$ and state a simple, but useful, lemma.

\begin{definition}\label{defn}A \emph{path decomposition} of a graph $G$ is a sequence of subsets of vertices $(B_1,B_2,\dots,B_r)$ such that \begin{enumerate}[(i)] \item $\bigcup_{1 \leq i \leq r} B_i = V(G)$;\label{i}

\item For all edges $vw \in E(G)$, $\exists~i \in \{1,2,\dots, r\}$ with $v \in B_i$ and $w \in B_i$;\label{ii}

\item For all $i,j,k \in \{1,2,\dots,r\}$, if $i \leq j \leq k$ then $B_i \cap B_k \subseteq B_j$.\label{iii}\end{enumerate}

The \textbf{width} of a path decomposition $(B_1,B_2,\dots,B_r)$ is $ \max_{1 \leq i \leq r} |B_i|-1$, and the \textup{pathwidth} of $G$, denoted $pw(G)$, is the minimum width over all possible path decompositions of $G$.  \end{definition} 

See the survey by~\cite{Bod} for more on pathwidth; the convention is to refer to subsets $B_1,B_2,\dots,B_r$ as \emph{bags}.  It can easily be seen that an equivalent statement of~(\ref{iii}) is: for each $v \in V(G)$, the set of bags $\{B_i~|~v \in B_i$ and $1 \leq i \leq r\}$ must form a subpath in the decomposition (the important point being that the subpath is, by definition, connected).  To avoid confusion between a path of vertices in a graph and a path of bags in a path decomposition, we will refer to a path of bags in a path decomposition as a \emph{bag-path}.  The next result will be used in Section~\ref{sec:pw} with respect to the product of cliques.  Though the original results are stated for tree decompositions, they obviously apply to path decompositions.  A short proof of the result for tree decompositions exists in~\cite{Bod2}, but the authors state that earlier proofs exist in~\cite{BodClique,German}.

\begin{lemma}\label{clique} Consider a path decomposition $(B_1,B_2,\dots,B_r)$ of  graph $G$, for some positive integer $r$.  Let $W \subseteq V(G)$ be a clique in $G$.  Then $W \subseteq B_i$, for some $1 \leq i \leq r$.   \end{lemma}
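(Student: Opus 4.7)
The plan is to recast the statement as a 1-dimensional Helly-type result on the bag indices. For each vertex $v \in V(G)$, let $I_v = \{\, i \in \{1,\dots,r\} : v \in B_i \,\}$. By condition~(i) of Definition~\ref{defn}, each $I_v$ is nonempty, and by the equivalent reformulation of condition~(iii) noted after the definition (that the bags containing $v$ form a connected bag-path), each $I_v$ is a set of consecutive integers, i.e.\ a discrete interval. Thus the problem reduces to showing that the family $\{I_v : v \in W\}$ has a common element.

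Next I would use the clique hypothesis together with condition~(ii). For any two distinct $v,w \in W$ the edge $vw$ lies in $E(G)$, so some bag $B_i$ contains both $v$ and $w$, giving $I_v \cap I_w \neq \emptyset$. Hence the discrete intervals $\{I_v : v \in W\}$ pairwise intersect.

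The remaining step is the Helly property for intervals on the integer line: a finite family of intervals in $\mathbb{Z}$ that pairwise intersect has a common element. I would prove this concretely by letting $\ell = \max_{v \in W} \min I_v$, realized by some $v^\star \in W$. For any other $w \in W$, write $I_w = [a_w,b_w]$. By the choice of $\ell$ we have $a_w \le \ell$, and since $I_{v^\star} \cap I_w \neq \emptyset$ with $\ell = \min I_{v^\star}$, we must have $b_w \ge \ell$. Therefore $\ell \in I_w$ for every $w \in W$, which means $W \subseteq B_\ell$.

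The only subtle point is the Helly step, but on the line it is genuinely elementary and the "largest left endpoint" argument above handles it in one line; no induction on $|W|$ is really needed. Everything else is just unpacking the three defining properties of a path decomposition, so I do not anticipate any real obstacle.
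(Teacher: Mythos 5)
Your proof is correct. Note, however, that the paper does not actually supply its own proof of this lemma: it states the result and defers to the literature (a short proof for tree decompositions in the cited survey, with earlier proofs elsewhere), so there is nothing in the paper to compare step-by-step against. Your argument is the standard self-contained one: condition~(iii) makes each index set $I_v$ a discrete interval, condition~(ii) applied to the edges of the clique makes these intervals pairwise intersecting, and the one-dimensional Helly property (via the ``largest left endpoint'' $\ell = \max_{v\in W}\min I_v$) produces a common bag $B_\ell \supseteq W$. Each step is sound; the only pedantic caveat is that the Helly step implicitly assumes $W$ is nonempty and finite, which is harmless here. Your version has the advantage of being elementary and complete where the paper merely cites, and it specializes cleanly to path decompositions rather than passing through the more general tree-decomposition statement.
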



\section{Search Number of $P_m \by P_n$ and $P_m \by K_n$}\label{sec:path}


\cite{Ellis} proved that  for $m \geq n$, $pw(P_m \by P_n) = n$ which by Inequality~(\ref{exx}) implies $s(P_m \by P_n) \in \{n,n+1,n+2\}$.  In this section, we show $s(P_m \by P_n)=n+1$ for $m \geq n$.  The notion of a search strategy was described in Section~\ref{sec:intro} as a sequence of actions designed so that once completed, all edges (and therefore vertices) of the graph have been cleared of the invisible intruder.  However, if a search strategy exists for a connected graph, once every searcher has been placed on the graph,  only the action of moving a searcher along an edge is required for the remainder of the search strategy ({\it i.e.} instead of subsequently removing a searcher from a vertex $x$ and placing it on a vertex $y$, the searcher could move along a path from $x$ to $y$).  Thus, if a search strategy exists for a connected graph, then the graph can be cleared by placing the searchers at a set of vertices and then, at each time step, moving one searcher along an edge.  This approach is sometimes called internal searching in the literature and we use it in the proof of Lemma~\ref{lower}.  We note that during the search strategy, recontamination of cleared edges may occur: the process is not necessarily monotonic.  Additionally, at a given time step, any edge that is not clear is considered to be \emph{dirty} and, when recontamination occurs, it occurs instantly.

\begin{lemma}\label{lower} For $n \geq 3$, $s(P_n \by P_n) \geq  n+1$.\end{lemma}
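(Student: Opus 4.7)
The plan is to argue by contradiction. Assume $s(P_n \by P_n) \le n$. By LaPaugh's monotonicity theorem for edge searching together with the internal-search observation in the paragraph preceding the lemma, one may restrict attention to a \emph{monotone internal} strategy: all $n$ searchers are placed initially, each subsequent move slides one searcher along an edge, and no cleared edge is ever recontaminated.

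For each time step $t$, let $V_t$ denote the set of vertices with every incident edge cleared, and let $B_t = \{v \notin V_t : v \text{ is adjacent to some } u \in V_t\}$. Any $v \in B_t$ has both a clean and a dirty incident edge, so $v$ must carry a searcher to block the intruder from traversing $v$; hence $|B_t| \le n$ at every time $t$. The proof will reach a contradiction by exhibiting a time $t^\star$ with $|B_{t^\star}| \ge n+1$.

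The key structural input is: any vertex cut of $P_n \by P_n$ that separates column $1$ from column $n$ has at least $n$ vertices, with equality only for a full column (and similarly for rows); this is a direct consequence of Menger's theorem applied to the $n$ row-disjoint paths across the grid. I would let $t^\star$ be the first time at which $V_{t^\star}$ contains a full row or column. Without loss of generality, $V_{t^\star}$ contains some full column $c$, and the move performed at time $t^\star$ cleared the last remaining dirty edge of a single vertex $v$ of column $c$. Then at time $t^\star - 1$ the other $n-1$ vertices of column $c$ already lie in $V_{t^\star - 1}$, $v$ itself lies in $B_{t^\star - 1}$, and I would use the structural cut fact to show that an adjacent column (say $c+1$, or $c-1$ if $c = n$) contributes at least $n$ further vertices to $B_{t^\star - 1}$, all distinct from $v$. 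This would give $|B_{t^\star - 1}| \ge n+1$, contradicting $|B_{t^\star - 1}| \le n$.

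The main obstacle is this final counting step: a priori $V_{t^\star - 1}$ could have a complicated shape with scattered finished vertices outside column $c$, potentially shrinking the adjacent-column contribution below $n$. Ruling out such degenerate configurations requires careful use of the minimum-cut characterization of $P_n \by P_n$ together with the hypothesis $n \ge 3$; the latter is precisely what guarantees the adjacent column genuinely contributes $n$ distinct boundary vertices (for $n = 2$ the grid is a $4$-cycle and the proposed bound fails).
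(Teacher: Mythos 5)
There is a genuine gap, and it is not the technical counting issue you flag at the end --- it is that the contradiction you are aiming for does not exist. Your plan is to exhibit a time $t^\star$ at which the set $B_{t^\star}$ of guarded boundary vertices has size at least $n+1$. But $pw(P_n \by P_n) = vs(P_n \by P_n) = n$ (Ellis's result, cited in Section~\ref{sec:path}), which means precisely that there is a monotone expansion of the cleared region whose vertex boundary never exceeds $n$. Concretely, take $t^\star$ to be the first time a full column is finished, say column $1$: at time $t^\star$ the finished set is column $1$ and the boundary is exactly column $2$, i.e.\ $n$ vertices; at time $t^\star - 1$ the boundary is $v_{j,1}$ (the last vertex of column $1$ to finish) together with the $n-1$ vertices $v_{i,2}$ with $i \neq j$ --- again exactly $n$. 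So the ``degenerate configurations'' you hope to rule out are not degenerate; they are exactly what an $n$-searcher strategy would look like if one existed, and no static boundary count of this kind can exceed $n$. The true obstruction is dynamic: with all $n$ searchers pinned on an $n$-vertex boundary each of whose vertices is incident with at least two dirty edges, no searcher can execute a clearing move (rule~(i) needs two searchers on a vertex, rule~(ii) needs all but one incident edge already clear), and this deadlock, not a boundary overflow, is what the paper's proof establishes through its case analysis of where the searchers $s_1,\dots,s_n$ must sit after the first column is cleared.

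A secondary issue: you invoke LaPaugh's monotonicity theorem together with the internal-search reduction, but the conversion to an internal strategy (replacing remove--place by a walk) can reintroduce recontamination, which is exactly why the paper explicitly declines to assume monotonicity. Monotonicity alone would suffice for your bound $|B_t| \le n$ (you never actually need the strategy to be internal for that step), so this is repairable --- but it does not help, since the target inequality $|B_{t^\star}| \ge n+1$ is unattainable for the reason above.
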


\begin{proof} Let $n \geq 3$ and label the vertices of $P_n \by P_n$ as $v_{i,j}$ for $1 \leq i, j \leq n$. For a contradiction, assume there exists a search strategy for $P_n \by P_n$ that uses $n$ searchers.  Let $R_i$ be the subgraph induced by $\{v_{i,1},v_{i,2},\dots,v_{i,n}\}$ and $C_j$ be the subgraph induced by $\{v_{1,j},v_{2,j},\dots,v_{n,j}\}$; we  informally refer to the subgraphs $R_i$ and $C_j$ as \emph{row $i$} and \emph{column $j$}, respectively.  Let $t$ be the last time step for which \vspace{0.08in}

\noindent (i) at the end of step $t-1$, at least one edge of $R_i$, $C_i$ is dirty for all $i \in [n]$, and

\noindent (ii) at the end of step $t$, every edge of $C_k$ is clear for some $k \in [n]$.  \vspace{0.08in}

\noindent Certainly (i) and (ii) must both occur at some step $t$ in order for there to exist a search strategy of $P_n \by P_n$.   Suppose that for some $x \in [n]$, $R_x$ does not contain a searcher at the end of step $t$.  Then $v_{x,k} = R_x \cap C_k$ is incident with a dirty edge of $R_x$ and a clear edge of $C_k$, which is instantly recontaminated, contradicting (i). Therefore, at the end of step $t$, every row contains at least one searcher. 

From (i) and (ii), we conclude that a searcher moves wlog from $v_{i+1,k}$ to $v_{i,k}$ during step $t$ for some $i \in [n-1]$.   If $i > 1$, a searcher must be located at $v_{i,k}$ immediately prior to step $t$ because edge $(v_{i,k},v_{i+1,k})$ was dirty but edge $(v_{i-1,k},v_{i,k})$ was clean.  Therefore, at the end of step $t$, there are two searchers located at $v_{i,k}$ and all other rows contain at least one searcher:  $s(P_n \by P_n) \geq n+1$.  To complete the proof, we assume $i=1$ and let $t'>t$ be the time step during which a second row or column is cleared.\vspace{0.08in}

\noindent \textbf{Claim:} At the end of step $t$, every edge in $R_1$ is dirty; and after step $t$ and before step $t'$, no searcher can move from one row to another.\vspace{0.08in}

Since only $n$ searchers are available, there is exactly one searcher in each row at the end of step $t$.  During step $t$, a searcher moves from $v_{k,2}$ to $v_{k,1}$ and at the end of step $t$, there is exactly one searcher in each row.  Then at the end of step $t-1$, there is no searcher in $R_1$ (else there are $n+1$ searchers) and, by (i), edge $(v_{1,k}v_{2,k})$ is dirty.  Thus, every edge in $R_1$ is dirty at the end of step $t-1$ and also at step $t$.  

Suppose that after step $t$ and before step $t'$, a searcher moves from row $j$ to row $j+1$ or $j-1$.  Then $R_j$ now contains a dirty edge (by (i) and (ii)) but no searcher.  Any clear edges in $R_j$ immediately become recontaminated along with the two edges of $C_k$ incident with $v_{j,k} \in R_j \cap C_k$. \vspace{0.08in}

The Claim has been proven. \vspace{0.08in} 

To conclude the proof, we consider two cases: $k \in \{2,3,\dots, n-1\}$ and $k \in \{1,n\}$. \vspace{0.1in}

Suppose $k \in \{2,3,\dots, n-1\}$.  For $j \in [n]$, let $s_j$ be the searcher in $R_j$ at the end of step $t$.  At the end of step $t$, $s_1$ is located at $v_{1,k}$ and by the Claim, every edge of $R_1$ is dirty.  Since every edge in $R_1$ is dirty, every vertex of $R_2 \backslash \{v_{2,k}\}$ is incident with a dirty edge.  As there is only one searcher in $R_2$, $s_2$ must be located at $v_{2,k}$ (otherwise $C_k$ is recontaminated via $v_{2,k}$).  By repeating this argument, we find that $s_i$ must be located at $v_{j,k}$ for each $j \in [n]$.  Then each searcher is located at a vertex incident with at least two dirty edges, and so no searcher can move at step $t+1$ without said move resulting in recontamination of at least two edges of $C_k$.  Therefore, $s(P_n \by P_n) \geq n+1$. \vspace{0.1in}

Suppose $k \in \{1,n\}$ and wlog assume $k=1$.  Then during step $t$, searcher $s_1$ moves from $v_{2,1}$ to $v_{1,1}$.  By the Claim, at the end of step $t$, edge $(v_{1,2},v_{1,3})$ is dirty. Then adjacent edge $(v_{1,2},v_{2,2})$ is also dirty at the end of step $t-1$.  Thus, after step $t$ and before step $t'$, $s_1$ may move to $v_{1,2}$, but cannot move elsewhere by the Claim (and because $v_{1,2}$ has at least two incident dirty edges). Thus, at the end of step $t'-1$, $s_1$ is located at either $v_{1,1}$ or $v_{1,2}$ and edge $(v_{1,2},v_{2,2})$ is dirty.  Similarly, at the end of step $t'-1$ for $j \in \{2,3,\dots, n-1\}$, if $s_j$ is located at a vertex of $\{v_{j,1},v_{j,2},\dots,v_{j,j}\}$ then edges $(v_{j,j},v_{j,j+1})$ and $(v_{j,j},v_{j-1,j})$ are dirty.  To prevent recontamination of the edges in $C_1$, searcher $s_{j+1}$ must be located at a vertex of $\{v_{j+1,1},v_{j+1,2},\dots,v_{j+1,j+1}\}$ at the end of step $t'-1$.

Note that searcher $s_n$ cannot be located at $v_{n,n}$ at the end of step $t'-1$; otherwise $R_n$ would be clear before step $t'$.  Thus, $s_n$ is located on one of $\{v_{1,n},v_{2,n},\dots,v_{n-1,n}\}$ at the end of step $t'-1$.  As no searcher is located in $C_n$ at the end of step $t'-1$ and $C_n$ contains at least one dirty edge, every edge of $C_n$ is dirty at the end of step $t'-1$.  

For $R_j$ to be clear by the end of step $t'$, some searcher $s_j$ must move from $v_{j,n-1}$ to $v_{j,n}$.  Thus $j= n-1$ or $j=n$ since, for $j < n-1$, $s_j$ cannot be located at $v_{j,n-1}$ at step $t'-1$.   Since edges $(v_{n-1,n-1},v_{n-2,n-1})$ and $(v_{n-1,n-1},v_{n-1,n})$ are both dirty at step $t'-1$, we note that $j \neq n-1$ (otherwise, edges of $C_1$ are recontaminated).  Therefore, at step $t'$, $s_n$ must move from $v_{n,n-1}$ to $v_{n,n}$, and $R_n$ is clear at the end of step $t'$.  This implies that at the end of step $t'-1$, searcher $s_j$ must be located at $v_{j,j}$, for $2 \leq j \leq n-1$ (otherwise, edges of $C_1$ are recontaminated).  Recall that $(v_{j,j},v_{j-1,j})$, $(v_{j,j},v_{j,j+1})$ are both dirty for $j \in \{2,3,\dots, n-1\}$ at the end of step $t'-1$ (and therefore $t'$).  So none of $s_2,s_3,\dots,s_n$ can move at step $t'+1$ without recontamination of some edges of $C_1$.

Note that $s_n$ could move from $v_{n,n}$ to $v_{n-1,n}$ at step $t'+1$ (or $t'+2$).  However, this results in $s_n$ becoming incident with two dirty edges $(v_{n-1,n-1},v_{n-1,n})$, $(v_{n-2,n},v_{n-1,n})$.   If $s_1$ is located at $v_{1,1}$, then $s_1$ can now move to $v_{1,2}$ at step $t'+1$ (or $t'+2$).  However, this results in $s_1$ being incident with two dirty edges and consequently, all searchers are incident with at least two dirty edges.  So no searcher can move after step $t'+2$ without recontaminating $C_1$. Therefore, $s(P_n \by P_n) \geq n+1$.\end{proof}

\begin{lemma}\label{up} For $n \geq 3$ and a connected finite graph $G$, $s(G \by P_n) \leq |V(G|+1$.\end{lemma}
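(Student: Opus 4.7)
The plan is to clear $G \by P_n$ one copy of $G$ at a time, using $|V(G)|+1$ searchers to maintain a barrier of one searcher at every vertex of the current copy plus one mobile extra searcher.

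Let $m = |V(G)|$. Label the vertices of $G \by P_n$ as $(v, i)$ with $v \in V(G)$ and $i \in [n]$, and write $G_i$ for the $i$-th copy of $G$ on $\{(v, i) : v \in V(G)\}$. The strategy begins by placing a searcher at each vertex of $G_1$ and an extra searcher on some $(v_e, 1)$. The invariant to maintain at the start of phase $i$ is: each vertex of $G_i$ carries a searcher, $(v_e, i)$ carries an additional (``extra'') searcher, and all edges of $G_1 \cup \cdots \cup G_{i-1}$ together with all rungs $(u, j)(u, j+1)$ for $j < i$ are cleared.

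In phase $i$, I would first use the extra searcher to follow a walk in $G_i$ that traverses every edge; such a walk exists because $G_i \cong G$ is connected. At each step the current vertex carries both the extra searcher and its stationary seat, so every traversal is a rule-(i) move that clears the traversed edge, and re-traversals preserve cleanliness. This clears all edges of $G_i$. If $i = n$, the strategy is complete; otherwise I would transition the barrier from $G_i$ to $G_{i+1}$ by sliding searchers one at a time across the rungs: for each $u \in V(G) \setminus \{v_e\}$, move the searcher at $(u, i)$ along the rung to $(u, i+1)$. At each such move, the edges of $G_i$ are clean from the walk and the rung to $(u, i-1)$ is clean from phase $i-1$ (or does not exist if $i = 1$), so rule (ii) applies and the rung is cleared. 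Finally handle $u = v_e$ similarly: slide one of its two searchers across (rule (ii) again) to clear the rung, and then slide the remaining one across the now-clean rung, putting two searchers at $(v_e, i+1)$ and restoring the invariant at phase $i+1$.

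The main obstacle is to verify that no cleared edge is recontaminated during the rung-sliding part, since at intermediate times many vertices of $G_i$ no longer carry a searcher. The key observation is that once $(u, i)$ loses its searcher, all its incident edges are clean (the $G_i$-edges from the walk, the rung to $(u, i-1)$ from phase $i-1$, and the rung to $(u, i+1)$ just cleared), so the intruder --- confined to the dirty edges of the as-yet-unprocessed rungs and of $G_{i+1} \cup \cdots \cup G_n$ --- cannot reach any cleared edge through $(u, i)$.
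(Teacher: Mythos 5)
Your proposal is correct and follows essentially the same strategy as the paper: station $|V(G)|$ searchers on one copy of $G$, use the extra searcher to clear that copy's edges, then advance all searchers across the rungs to the next copy. You simply spell out the recontamination check during the rung-sliding step, which the paper leaves implicit.
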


\begin{proof} Let $n \geq 3$ and $G$ be a connected finite graph.  Label the vertex set of $G \by P_n$ as $v_{i,j}$, for $1 \leq i \leq |V(G)|$ and $1 \leq j \leq n$.  Place one searcher on each vertex of $\{v_{i,1}~:~1 \leq i \leq |V(G)|\}$; we will refer to these searchers as ``the first $|V(G)|$ searchers''.  The $|V(G)|+1^{th}$ searcher clears the edges of the subgraph induced by $\{v_{i,1}~:~1 \leq i \leq |V(G)|\}$.  Then the first $|V(G)|$ searchers move from $v_{i,1}$ to $v_{i,2}$ for each $1 \leq i \leq |V(G)|$ and the $|V(G)|+1^{th}$ searcher clears the edges of the subgraph induced by $\{v_{i,2}~:~1 \leq i \leq |V(G)|\}$.  Continuing in this manner, we find $|V(G)|+1$ searchers sufficient to clear $G \by K_n$.\end{proof}

 \cite{YDA} observed that if $H$ is a minor of $G$, then $s(G) \geq s(H)$.  Since $K_m$ is a minor of $K_m \by P_n$, we observe $s(K_m \by P_n) \geq s(K_m) = m+1$.  Let $\alpha = \min\{m,n\}$. As $P_\alpha \by P_\alpha$ is a minor of $P_m \by P_n$, we observe $s(P_m \by P_n) \geq s(P_\alpha \by P_\alpha) = \alpha +1 = \min\{m,n\}+1$ by Lemma~\ref{lower}.  Applying Lemma~\ref{up} to achieve the upper bounds, the following theorem is immediate.

\begin{theorem} \label{thm:pathclique} For $m,n \geq 3$, $s(P_m \by P_n) = \min\{m,n\}+1$ and $s(K_m \by P_n) = m+1$.\end{theorem}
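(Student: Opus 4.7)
The plan is to derive both equalities by sandwiching $s(\cdot)$ between matching upper and lower bounds, since all the substantive work has already been done in Lemmas~\ref{lower} and~\ref{up} together with the minor-monotonicity observation of~\cite{YDA}. No new combinatorial argument is needed; the proof is essentially a bookkeeping step that combines the three ingredients listed in the paragraph immediately preceding the theorem.

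For the upper bounds, I would apply Lemma~\ref{up} directly. Taking $G = K_m$ gives $s(K_m \by P_n) \leq |V(K_m)| + 1 = m+1$. For $P_m \by P_n$, I would exploit commutativity of the Cartesian product and take the smaller factor as the ``$G$'' in Lemma~\ref{up}: choosing $G = P_{\min\{m,n\}}$ yields $s(P_m \by P_n) \leq \min\{m,n\} + 1$.

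For the lower bounds, I would invoke the fact that $s(G) \geq s(H)$ whenever $H$ is a minor of $G$. The clique case is immediate: $K_m$ is a minor of $K_m \by P_n$ (contract each copy of $P_n$ to a single vertex), so $s(K_m \by P_n) \geq s(K_m) = m+1$. For the path case, set $\alpha = \min\{m,n\}$; then $P_\alpha \by P_\alpha$ sits inside $P_m \by P_n$ as a subgraph and hence as a minor, so by Lemma~\ref{lower} (applied with $n$ replaced by $\alpha$, which is at least $3$ by hypothesis) we obtain $s(P_m \by P_n) \geq s(P_\alpha \by P_\alpha) \geq \alpha + 1 = \min\{m,n\} + 1$.

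Combining the two inequalities in each case gives the desired equalities. The main conceptual obstacle was already overcome in Lemma~\ref{lower}, where the recontamination analysis pinned down the lower bound for $P_n \by P_n$; at the level of the theorem itself, the only thing to watch is that Lemma~\ref{lower} requires $n \geq 3$, which is exactly why the theorem's hypothesis reads $m,n \geq 3$ rather than the weaker $m,n \geq 2$.
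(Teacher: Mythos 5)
Your write-up reproduces the paper's own argument almost verbatim: Lemma~\ref{up} for both upper bounds, Lemma~\ref{lower} together with the minor $P_\alpha \by P_\alpha$ (where $\alpha = \min\{m,n\}$) for the path lower bound, and minor-monotonicity of $s$ for the clique lower bound. The one step you should not take on faith---and which the paper asserts in exactly the same words---is $s(K_m)=m+1$. This is false: for $m \geq 4$ the edge-search number of $K_m$ is $m$ (for $K_4$ on $\{a,b,c,d\}$, place two searchers on $a$ and one on each of $b,c$; sweep the triangle $abc$ with the doubled searcher, push a searcher from $a$ to $d$ by clearing rule~(i), clear $bd$ by rule~(ii), and finish $cd$ from the now-doubled vertex $d$), and $s(K_3)=2$. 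So the minor $K_m$ only yields $s(K_m \by P_n) \geq m$, one short of what you need. The lower bound is easily repaired from a result the paper already quotes at the start of Section~\ref{sec:pw}: $s(K_m \by K_2) = m+1$ for $m \geq 3$ (\cite{YDA}), and $K_m \by K_2 = K_m \by P_2$ is a subgraph, hence a minor, of $K_m \by P_n$, which gives $s(K_m \by P_n) \geq m+1$. Everything else in your proposal is fine; indeed your statement $s(P_\alpha \by P_\alpha) \geq \alpha+1$ is more careful than the paper's ``$=\alpha+1$'', since Lemma~\ref{lower} only supplies the inequality.
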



\section{Pathwidth of the Product of Cliques}\label{sec:pw}


With respect to the search number of products of cliques, \cite{YDA} showed that $s(K_n \by K_2) = n+1$ for $n \geq 3$ and that, for $n \geq 1$, $m\geq 2$, \begin{equation}\label{eq}s(K_m \by K_n) \leq n(m-1)+1.\end{equation}  In this section, we improve the above bound by a factor of a half.  To do this, we consider the pathwidth of $K_m \by K_n$.  Robertson and Seymour introduced the concepts of pathwidth (see \cite{RS2}) and treewidth (see \cite{RS}) which played a fundamental role in their work on graph minors.  Pathwidth is of interest to researchers because many intractable problems can be solved efficiently on graphs of bounded pathwidth.  

Let $\omega(G)$, $\omega(H)$ denote the clique numbers of $G$, $H$, respectively.  Using the result of \cite{YDA} that $s(G) \geq s(H)$ when $H$ is a minor of $G$, the following corollary is immediate. 

 \begin{corollary}\label{cor:1} For any graphs $G$ and $H$,  \begin{displaymath}\text{(a)~~} s(G \by H) \geq \max \{ s(G \by K_{\omega(H)}), s(H \by K_{\omega(G)} )\},\end{displaymath} \begin{displaymath}\text{(b)~} s(G \by H) \geq s(K_{\omega(G)} \by K_{\omega(H)}).\end{displaymath} \end{corollary}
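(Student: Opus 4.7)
The plan is to bootstrap both inequalities from the minor-monotonicity statement $s(G_1) \geq s(G_2)$ whenever $G_2$ is a minor of $G_1$, which was just recalled from \cite{YDA}. The key observation is that, by the definition of the clique number, $K_{\omega(H)}$ is a subgraph of $H$, and every subgraph is a minor (delete the extra vertices and edges, no contractions needed).

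The one nontrivial step is to verify that subgraph containment in a factor transfers to subgraph containment in the Cartesian product: if $H' \subseteq H$, then $G \by H' \subseteq G \by H$. I would spell this out briefly by restricting to the vertex set $V(G) \times V(H')$ and checking from the definition of $\by$ that each edge of $G \by H'$ (either a ``$G$-edge'' in some copy of $G$ indexed by a vertex of $H'$, or an ``$H'$-edge'' in some copy of $H'$ indexed by a vertex of $G$) is already present in $G \by H$. Applied with $H' = K_{\omega(H)}$, this exhibits $G \by K_{\omega(H)}$ as a subgraph, hence a minor, of $G \by H$, so $s(G \by H) \geq s(G \by K_{\omega(H)})$. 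Exchanging the roles of $G$ and $H$ and using that $\by$ is commutative up to isomorphism gives the other term in the maximum, completing part~(a).

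Part~(b) follows by applying the same reasoning twice in sequence: $s(G \by H) \geq s(G \by K_{\omega(H)}) \geq s(K_{\omega(G)} \by K_{\omega(H)})$, where for the second inequality I invoke the factor-subgraph observation with $K_{\omega(G)} \subseteq G$ (viewing $G \by K_{\omega(H)}$ as a product whose first factor contains $K_{\omega(G)}$). No serious obstacle arises here; the word ``immediate'' in the statement is justified because the entire argument reduces to the elementary monotonicity of $\by$ under taking subgraphs in the factors, combined with the already-cited minor-monotonicity of $s$.
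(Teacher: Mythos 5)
Your proposal is correct and follows exactly the route the paper intends: the paper declares the corollary ``immediate'' from the minor-monotonicity of $s$ cited from the searching literature, and your argument simply fills in the (routine) verification that a subgraph of a factor yields a subgraph, hence a minor, of the Cartesian product. Nothing further is needed.
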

 
Corollary~\ref{cor:1} with Inequality~(\ref{exx}) yields the following relationship with pathwidth.

\begin{lemma}\label{lem:x} (a) For any graphs $G$ and $H$, $s(G \by H) \geq pw(K_{\omega(G)} \by K_{\omega(H)})$. \begin{displaymath}\text{(b)~For~} n \geq 1, m \geq 2, pw(K_m \by K_n) \leq s(K_m \by K_n) \leq pw(K_m \by K_n) + 2.\end{displaymath}
\end{lemma}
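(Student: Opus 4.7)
The plan is to assemble both parts directly from two ingredients already established in the excerpt: Corollary~\ref{cor:1}(b) and Inequality~(\ref{exx}). No new construction is needed; the lemma is essentially packaging these facts in the form most convenient for the downstream corollary.

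For part (b), I would simply specialize Inequality~(\ref{exx}) to $G := K_m$ and $H := K_n$. The stated hypotheses $n \geq 1$ and $m \geq 2$ just ensure $K_m \by K_n$ is a connected graph with at least one edge, so that the general pathwidth--search-number sandwich from Inequality~(\ref{exx}) applies. This gives $pw(K_m \by K_n) \leq s(K_m \by K_n) \leq pw(K_m \by K_n) + 2$ immediately.

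For part (a), I would chain two inequalities. First, Corollary~\ref{cor:1}(b) yields $s(G \by H) \geq s(K_{\omega(G)} \by K_{\omega(H)})$. Second, applying the left-hand half of Inequality~(\ref{exx}) to the Cartesian product $K_{\omega(G)} \by K_{\omega(H)}$ (rather than to $G \by H$) gives $s(K_{\omega(G)} \by K_{\omega(H)}) \geq pw(K_{\omega(G)} \by K_{\omega(H)})$. Composing these two inequalities produces the claimed bound.

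Since both parts are purely compositions of results already proved, there is no genuine obstacle. The only thing to check carefully is that the objects to which each previous result is applied meet the stated hypotheses (connectedness for Inequality~(\ref{exx}); the minor relationship $K_{\omega(G)} \by K_{\omega(H)} \preceq G \by H$ used inside Corollary~\ref{cor:1}). Both are immediate: a clique of order $\omega(G)$ is a subgraph (hence minor) of $G$, and the Cartesian product of minors is a minor of the Cartesian product, so the invocation of Corollary~\ref{cor:1}(b) is legitimate.
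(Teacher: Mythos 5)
Your proposal is correct and matches the paper's own (implicit) argument exactly: the paper derives Lemma~\ref{lem:x} in one line by combining Corollary~\ref{cor:1} with Inequality~(\ref{exx}), which is precisely the chaining you describe for part (a) and the specialization $G=K_m$, $H=K_n$ for part (b). Nothing further is needed.
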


For Lemma~\ref{lem:x}~(a) to be useful, $pw(K_{\omega(G)} \by K_{\omega(H)})$ must be known.   The remainder of this section is devoted to proving that for $n \geq m \geq 2$, \begin{displaymath}pw(K_m \by K_n) = \begin{cases} \frac{m}{2}n + \frac{m}{2} -1 & \text{ if $m$ even} \\ \lceil \frac{m}{2} \rceil n -1 & \text{ if $m$ odd.}\end{cases}\end{displaymath} We first note that the treewidth of the product of two cliques of order $n \geq 3$ was determined by \cite{Lucena}: $tw(K_n \by K_n) = \frac{n^2}{2}+\frac{n}{2}-1$.  As treewidth forms a lower bound for pathwidth, the result of~\cite{Lucena} provides a lower bound for $pw(K_n \by K_n)$, for $n \geq 3$.  \cite{ST} showed that construction of a bramble of size $k$ proves $tw(G) \geq k-1$ and, to determine the lower bound for $tw(K_n \by K_n)$, \cite{Lucena} constructed a bramble of order $\frac{n^2}{2}+\frac{n}{2}$.  Although it seems a generalization of the bramble construction of~\cite{Lucena} could be used to obtain a lower bound for $tw(K_m \by K_n)$, this would still only yield a lower bound for $pw(K_m \by K_n)$.  Instead, we consider a direct approach to providing a lower bound for $pw(K_m \by K_n)$, without introducing brambles.  In Section~\ref{sec:upper}, we prove the upper bound for $pw(K_m \by K_n)$ and in Section~\ref{sec:end}, we state conclusions and implications of the upper and lower bounds.  

The following notation is used in the remainder of this section: label the vertex set of $K_m \by K_n$ as $v_{i,j}$ for $1 \leq i \leq m$, $1 \leq j \leq n$.   For any $i \in [m]$, the subgraph of $K_m \by K_n$ induced by vertices $\{v_{1,i},v_{2,i},\dots, v_{m,i}\}$ is called an $m$-clique as it is a subgraph isomorphic to $K_m$.  Similarly, for any $j \in [n]$, the subgraph of $K_m \by K_n$ induced by vertices $\{v_{j,1},v_{j,2},\dots,v_{j,n}\}$ is called an $n$-clique.


\subsection{Lower Bound for the Pathwidth of the Product of Cliques}\label{sec:lower}


\begin{lemma}\label{lem:k2} For $n \geq 2$, $pw(K_2 \by K_n) \geq  n$.\end{lemma}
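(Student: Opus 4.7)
The plan is to apply Lemma~\ref{clique} to the two $n$-cliques sitting inside $K_2 \by K_n$ and then exploit the perfect matching between them together with the subpath property to force an oversized bag. Let $A = \{v_{1,1},\ldots,v_{1,n}\}$ and $B = \{v_{2,1},\ldots,v_{2,n}\}$ be the two $n$-cliques of $K_2 \by K_n$; these are joined by the matching edges $v_{1,j}v_{2,j}$, $j \in [n]$.

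Given any path decomposition $(B_1,\ldots,B_r)$, Lemma~\ref{clique} furnishes indices $a$ and $b$ with $A \subseteq B_a$ and $B \subseteq B_b$. If $a=b$ then $|B_a| \geq 2n \geq n+1$ and we are done. Otherwise assume $a<b$. If $B_a$ contains any vertex of $B$ (or $B_b$ contains any vertex of $A$), then that bag already has size at least $n+1$; so I may assume $B \cap B_a = \emptyset$ and $A \cap B_b = \emptyset$.

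The key move is to let $k$ be the smallest index with $B \cap B_k \neq \emptyset$; the subpath property applied to any $v_{2,j_0} \in B_k$, together with $B \subseteq B_b$ and $B \cap B_a = \emptyset$, forces $k>a$. Now for every $j \in [n]$, the matching edge $v_{1,j}v_{2,j}$ must be witnessed in some bag $B_{c_j}$; minimality of $k$ (no vertex of $B$ appears before index $k$) gives $c_j \geq k$. Since $v_{1,j} \in B_a$ and $v_{1,j} \in B_{c_j}$ with $a < k \leq c_j$, the subpath property yields $v_{1,j} \in B_k$. Therefore $A \cup \{v_{2,j_0}\} \subseteq B_k$, so $|B_k| \geq n+1$, giving $pw(K_2 \by K_n) \geq n$.

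The main obstacle is the case $a<b$: Lemma~\ref{clique} alone only produces bags of size $n$, so one has to locate a single bag that is strictly larger. The smallest index $k$ at which $B$ first intrudes is the natural candidate, and the matching edges $v_{1,j}v_{2,j}$ are precisely the device that drags all of $A$ into $B_k$ via the subpath property. Everything else in the argument is bookkeeping with this choice of $k$.
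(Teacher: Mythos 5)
Your proof is correct and follows essentially the same route as the paper's: apply Lemma~\ref{clique} to the two $n$-cliques, then use the matching edges $v_{1,j}v_{2,j}$ together with the subpath property of condition (iii) to force a bag of size at least $n+1$. The only cosmetic difference is that you argue directly from the first bag meeting $B$ (pulling all of $A$ into it), whereas the paper argues by contradiction from the first bag containing a matched pair; both hinge on the same interpolation step.
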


\begin{proof} For a contradiction, suppose $(B_1,B_2,\dots, B_r)$ is a path decomposition where $\max_{1 \leq i \leq r} |B_i| \leq n$ for some $n \geq 2$. By Lemma~\ref{clique}, there exists $i \in [r], j \in [r]$ such that bag $B_i$ contains the $n$-clique  $\{v_{1,1}$, $v_{1,2}$, $\dots,v_{1,n}\}$ and $B_j$ contains the $n$-clique $\{v_{2,1}$, $v_{2,2}$, $\dots, v_{2,n}\}$.  Certainly, $i \neq j$ (else $|B_i| \geq 2n$), so wlog assume $i < j$.  

Let $B_x$ be the lowest-indexed bag that contains a pair of vertices of the form $v_{1,\alpha},v_{2,\alpha}$, for any $\alpha \in [n]$.  Clearly $i < x < j$ (else one of $B_i,B_j$ contains $n+1$ vertices).  As $B_x$  contains at most $n-2$ vertices other than $v_{1,\alpha},v_{2,\alpha}$, we observe $v_{1,\beta} \notin B_x$, for some $\beta \in [n]$.  Therefore, the pair $v_{1,\beta},v_{2,\beta}$ must appear together in a bag with higher index than $B_x$ (by Definition~\ref{defn}(ii), $v_{1,\beta}, v_{2,\beta}$ must appear in some bag together).   But then we do not have a path decomposition as the set of bags containing $v_{1,\beta}$ does not form a bag-path: $v_{1,\beta} \in B_i$, $v_{1,\beta} \notin B_x$, and $v_{i,\beta}$ is in a bag with higher index than $B_x$.\end{proof}

We next prove a simple, but useful, lemma.  

\begin{lemma}\label{lem:subsets}
 Let $S$ be a set containing $m \geq 3$ elements. Consider an ordered partition of $S$ into at least three non-empty subsets, each of which contains strictly fewer than $\lceil \frac{m}{2} \rceil$ elements, and label the subsets of the ordered partition $S_1,S_2,\dots,S_r$, for some integer $r \geq 3$.  Then, for some $t \in \mathbb{N}$, \begin{displaymath}1 \leq |S_1 \cup S_2 \cup \dots S_{t-1}| \leq \Big\lfloor \frac{m}{2}\Big \rfloor, ~~~1 \leq |S_t| \leq  \Big\lfloor \frac{m}{2} \Big\rfloor, ~~~1 \leq |S_{t+1},S_{t+2},\dots,S_r| \leq  \Big\lfloor \frac{m}{2} \Big\rfloor.\end{displaymath} \end{lemma}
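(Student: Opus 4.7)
The plan is to choose $t$ via a cumulative-size argument. Define partial sums $a_i = |S_1| + |S_2| + \cdots + |S_i|$ for $i \in \{0,1,\ldots,r\}$, so $a_0 = 0$ and $a_r = m$. The three required inequalities translate to finding $t$ with $a_{t-1} \in [1,\lfloor m/2 \rfloor]$, $|S_t| \in [1,\lfloor m/2 \rfloor]$, and $m - a_t \in [1,\lfloor m/2 \rfloor]$. Observe that the middle bound is free from the hypothesis: each $|S_i| \geq 1$ by non-emptiness, and $|S_i| < \lceil m/2 \rceil$ yields $|S_i| \leq \lceil m/2 \rceil - 1 \leq \lfloor m/2 \rfloor$ (with equality exactly when $m$ is odd).

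I would then let $t$ be the smallest index for which $a_t \geq \lceil m/2 \rceil$; such a $t$ exists because $a_r = m \geq \lceil m/2 \rceil$. The key step is to argue that $2 \leq t \leq r - 1$, and both inequalities use crucially the \emph{strict} bound $|S_i| < \lceil m/2 \rceil$ from the hypothesis. If $t = 1$, then $|S_1| = a_1 \geq \lceil m/2 \rceil$, directly contradicting the hypothesis. If $t = r$, then $a_{r-1} < \lceil m/2 \rceil$, so $|S_r| = m - a_{r-1} > m - \lceil m/2 \rceil = \lfloor m/2 \rfloor$; but this forces $|S_r| \geq \lfloor m/2 \rfloor + 1 \geq \lceil m/2 \rceil$, again contradicting the hypothesis.

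Once $2 \leq t \leq r-1$ is established, the three size bounds fall out by bookkeeping. By minimality of $t$, $a_{t-1} \leq \lceil m/2 \rceil - 1 \leq \lfloor m/2 \rfloor$, while $a_{t-1} \geq |S_1| \geq 1$ since $t \geq 2$. By the defining property of $t$, $a_t \geq \lceil m/2 \rceil$ gives $m - a_t \leq \lfloor m/2 \rfloor$, while $m - a_t \geq |S_r| \geq 1$ since $t \leq r - 1$. Combined with the bound on $|S_t|$ already noted, this gives the claim. The main obstacle is simply identifying the right index and recognizing that the strict-inequality hypothesis is exactly what prevents $t$ from collapsing to an endpoint; the rest is arithmetic.
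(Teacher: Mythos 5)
Your proof is correct and follows essentially the same approach as the paper: both arguments select $t$ as the first index at which the cumulative size $|S_1\cup\cdots\cup S_t|$ crosses roughly $m/2$, then use the strict hypothesis $|S_i|<\lceil m/2\rceil$ to confirm that $t$ is not an endpoint and that all three unions land in $[1,\lfloor m/2\rfloor]$. The only cosmetic difference is the threshold (you use $a_t\geq\lceil m/2\rceil$, the paper uses $a_t>\lfloor m/2\rfloor$, which differ only for even $m$), and you make the check $2\leq t\leq r-1$ more explicit than the paper does.
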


\begin{proof}
Let $t$ be the smallest integer for which $|S_1 \cup S_2 \cup \dots \cup S_t| > \lfloor \frac{m}{2} \rfloor.$  Then $1 \leq |S_1 \cup S_2 \cup \dots \cup S_{t-1}| \leq \lfloor \frac{m}{2} \rfloor.$  By the hypothesis, $1 \leq |S_t| < \lceil \frac{m}{2} \rceil$.  As a result, $1 \leq |S_t| \leq \lfloor \frac{m}{2} \rfloor$ as desired.

As $|S_1 \cup S_2 \cup \dots \cup S_t| > \lfloor \frac{m}{2} \rfloor$, we know $|S_{t+1} \cup S_{t+2} \cup \dots \cup S_r| \leq \lfloor \frac{m}{2} \rfloor$.  It remains to show that $1 \leq |S_{t+1} \cup S_{t+2} \cup \dots \cup S_r|$.  If $m$ is even, then $|S_t|\leq \frac{m}{2}-1< \lceil \frac{m}{2}\rceil$, so $|S_1 \cup S_2 \cup \dots \cup S_t| \leq \frac{m}{2} + \frac{m}{2}-1 < m$.  If $m$ is odd, then $|S_t|\leq \lfloor \frac{m}{2} \rfloor$, so $|S_1 \cup S_2 \cup \dots \cup S_t| \leq \lfloor \frac{m}{2} \rfloor + \lfloor \frac{m}{2} \rfloor < m$.  Thus, $1 \leq |S_{t+1} \cup S_{t+2} \cup \dots \cup S_r|$.\end{proof}

In the remaining $2$ proofs of this subsection, we will repeatedly apply the result of Lemma~\ref{clique} to observe that in a path decomposition, every $n$-clique (and $m$-clique) must be contained in some bag.   

\begin{theorem}\label{lem:lowerpw}
For $n \geq m \geq 4$, $pw(K_m \by K_n) \geq \lceil \frac{m}{2} \rceil n - 1$.
\end{theorem}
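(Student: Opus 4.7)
The plan is to exhibit a bag of size at least $\lceil m/2\rceil n$ in any path decomposition of $K_m\by K_n$; the pathwidth bound then follows immediately. Lemma~\ref{clique} guarantees that each row-clique $R_i$ lies inside some bag $B_{f(i)}$ and each column-clique $C_j$ lies inside some bag $B_{g(j)}$, while the subpath condition of Definition~\ref{defn}(iii) forces $v_{i,j}$ to appear in every bag whose index lies between $f(i)$ and $g(j)$.

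Group the rows by their $f$-value into non-empty blocks $T_1,T_2,\dots,T_R$, ordered by increasing $f$-value, so that every row in $T_k$ has a common defining bag index. If some $|T_k|\ge\lceil m/2\rceil$ then that bag already contains $\lceil m/2\rceil$ vertex-disjoint $n$-cliques and the theorem holds; and if $R\le 2$ then pigeonhole forces this to happen. Otherwise $R\ge 3$ and every $|T_k|<\lceil m/2\rceil$, which is precisely the hypothesis of Lemma~\ref{lem:subsets} applied to the ordered partition of $[m]$. Applying that lemma produces a split index $t^*$ such that the three blocks
\begin{displaymath}
A=T_1\cup\dots\cup T_{t^*-1},\qquad T_{t^*},\qquad C=T_{t^*+1}\cup\dots\cup T_R
\end{displaymath}
each have size in $[1,\lfloor m/2\rfloor]$.

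Let $\beta$ denote the common bag index of the rows in $T_{t^*}$; the claim is $|B_\beta|\ge\lceil m/2\rceil n$. Every row in $T_{t^*}$ sits entirely inside $B_\beta$, contributing $|T_{t^*}|n$ vertices. For $i\in A$ the inequality $f(i)<\beta$ together with the subpath condition forces $v_{i,j}\in B_\beta$ whenever $g(j)\ge\beta$; symmetrically, for $i\in C$ the vertex $v_{i,j}$ lies in $B_\beta$ whenever $g(j)\le\beta$. Writing $\lambda,\mu,\rho$ for the numbers of columns with $g(j)$ less than, equal to, and greater than $\beta$ respectively, the tally yields
\begin{displaymath}
|B_\beta|\ \ge\ |T_{t^*}|n + |A|(\mu+\rho) + |C|(\lambda+\mu)\ =\ mn - |A|\lambda - |C|\rho,
\end{displaymath}
and since $|A|,|C|\le\lfloor m/2\rfloor$ while $\lambda+\rho\le n$, the subtrahend is at most $\lfloor m/2\rfloor n$. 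Hence $|B_\beta|\ge mn-\lfloor m/2\rfloor n=\lceil m/2\rceil n$.

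The main obstacle I foresee is the preliminary case analysis that justifies invoking Lemma~\ref{lem:subsets}---namely excluding the situations in which too few blocks or one over-large block already settle the theorem---together with checking that the three contributions counted for $B_\beta$ index disjoint sets of vertices. Once the balanced partition is secured, the arithmetic is routine, relying only on $|A|+|T_{t^*}|+|C|=m$ and the size constraints $|A|,|C|\le\lfloor m/2\rfloor$ supplied by Lemma~\ref{lem:subsets}.
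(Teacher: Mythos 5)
Your argument is correct and follows essentially the same route as the paper: both apply Lemma~\ref{lem:subsets} to the distribution of the $m$ $n$-cliques among the bags and then show that the resulting middle bag must contain at least $\lceil \frac{m}{2}\rceil n$ vertices. The only difference is the final count --- the paper pairs the $n$-cliques on the smaller side with distinct $n$-cliques on the larger side and uses the edge condition to force $n$ vertices per pair into the middle bag, whereas you locate each column-clique's bag and invoke the subpath condition; both tallies yield the same bound.
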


\begin{proof} For a contradiction, suppose $(B_1$, $\dots$, $B_r)$ is a path decomposition where $\max_{1 \leq i \leq r} |B_i| \leq \lceil \frac{m}{2} \rceil n - 1$.  Let $S$ be the set of $m$  $n$-cliques in $K_m \by K_n$.  Bags $B_1,B_2,\dots,B_r$ form an ordered partition of $S$ into non-empty subsets, each bag containing fewer than $\lceil \frac{m}{2} \rceil$ $n$-cliques (as each bag contains at most $\lceil \frac{m}{2} \rceil n -1$ vertices).  By Lemma~\ref{lem:subsets},  $X = B_1 \cup B_2 \cup \dots \cup B_{t-1}$ contains $i$ $n$-cliques, for some $i \in [ \lfloor \frac{m}{2}\rfloor]$, $B = B_t$ contains $j$ $n$-cliques, for some $j \in [ \lfloor \frac{m}{2}\rfloor]$,  and $Y = B_{t+1} \cup B_{t+2} \cup \dots \cup B_r$ contains $k$ $n$-cliques, for some $k \in [\lfloor \frac{m}{2} \rfloor]$.  

Suppose wlog that $i \geq k$ and pair each $n$-clique in $Y$ with a distinct $n$-clique in $X$.  For instance, if $\{v_{b,1},v_{b,2},\dots, v_{b,n}\}$ is an $n$-clique in $Y$, it is paired with some $n$-clique $\{v_{a,1},v_{a,2},\dots, v_{a,n}\}$ in $X$.  Every bag on the bag-path between $X$ and $Y$ must contain at least one of $v_{a,\ell}$, $v_{b,\ell}$ for each $\ell \in [n]$ (otherwise we contradict Definition~\ref{defn}(iii)).  Since there are $k$ pairings, there are at least $kn$ vertices in $B$ in addition to the $jn$ vertices from the $j$ $n$-cliques in $B$. So, $|B| \geq (j + k)n = (m-i)n$ as $i+j+k=m$ (the number of $n$-cliques).  Note that $(m-i)n \leq |B| \leq \lceil \frac{m}{2}\rceil n-1$ (the upper bound being the initial hypothesis) implies $i > \lfloor \frac{m}{2} \rfloor$, which contradicts the fact that $i \in [\lfloor \frac{m}{2}\rfloor]$.    Therefore, $B$ contains at least $\lceil\frac{m}{2} \rceil n$ vertices and $pw(K_m \by K_n) \geq \lceil \frac{m}{2} \rceil n - 1$.\end{proof}

Given a minimum width path decomposition $(B_1,B_2,\dots,B_r)$ of graph $G$, the \emph{length} of the decomposition is $r$.  The next result will be used to increase the lower bound of $pw(K_m \by K_n)$ for $m$ even.

\begin{lemma}\label{lem1}
For even $m$ and $n \geq m \geq 4$, suppose $pw(K_m \by K_n) \leq  \frac{m}{2}n + \frac{m}{2}  - 2$ and of the path decompositions of minimum width, let $(B_1,B_2,\dots,B_r)$ be a decomposition of minimum length.  Then for each $i \in [r]$, $B_i$ contains fewer than $\frac{m}{2}$ $n$-cliques.  \end{lemma}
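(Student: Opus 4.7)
The plan is a proof by contradiction: assume some bag $B_i$ contains at least $\frac{m}{2}$ $n$-cliques. Since $|B_i| \leq \frac{m}{2}n + \frac{m}{2} - 1$ and $n \geq m \geq 4$ forces $(\frac{m}{2}+1)n > \frac{m}{2}n + \frac{m}{2} - 1$, $B_i$ in fact contains exactly $\frac{m}{2}$ $n$-cliques together with at most $\frac{m}{2} - 1$ additional vertices. Relabelling if necessary, assume these are the rows $R_a = \{v_{a,1},\dots,v_{a,n}\}$ for $a \in [m/2]$. For each $a > m/2$, Lemma \ref{clique} gives some bag $B_{k_a} \neq B_i$ entirely containing $R_a$, and all bags containing $R_a$ must lie strictly on one side of $B_i$; otherwise the subpath property applied to each vertex of $R_a$ would force $R_a \subseteq B_i$, contradicting the count.

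The first step is to eliminate the case that some $R_a$ ($a > m/2$) lies on the left of $B_i$ and some $R_b$ ($b > m/2$) lies on the right. For each column $c$ the column $m$-clique is contained in some bag $B_{\ell_c}$. If $\ell_c \geq i$, then $v_{a,c}$ appears in $B_{\ell_c}$ and in $B_{k_a}$ with $k_a < i$, forcing $v_{a,c} \in B_i$ by subpath; if $\ell_c < i$, the symmetric argument gives $v_{b,c} \in B_i$. So for every column $c$ at least one of $v_{a,c}, v_{b,c}$ lies in $B_i$, producing $n$ distinct vertices outside $R_1 \cup \cdots \cup R_{m/2}$; hence $|B_i| \geq \frac{m}{2}n + n > \frac{m}{2}n + \frac{m}{2} - 1$, a contradiction.

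Thus WLOG every $R_a$ with $a > m/2$ lies on the left of $B_i$, and I aim to contradict minimality of length by shortening the decomposition without changing its width. If $i < r$, then for any $v_{a,c} \in B_{i+1}$ either $a \leq m/2$ (so $v_{a,c} \in R_a \subseteq B_i$) or $a > m/2$ (so $v_{a,c}$ lies in $B_{k_a}$ with $k_a < i < i+1$, forcing $v_{a,c} \in B_i$ by subpath). Thus $B_{i+1} \subseteq B_i$, and deleting $B_{i+1}$ from the sequence produces a valid path decomposition of the same width and strictly smaller length, contradicting minimality.

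The main obstacle is the boundary case $i = r$, where no bag exists to the right to delete. Here I first show, by a counting argument, that no column $m$-clique lies in $B_r$: if $\kappa$ columns satisfy $\ell_c = r$, each contributes $\frac{m}{2}$ vertices $v_{a,c}$ with $a > m/2$ to $B_r$ outside $R_1 \cup \cdots \cup R_{m/2}$, so $\kappa \cdot \frac{m}{2} \leq \frac{m}{2} - 1$ and $\kappa = 0$ since $\frac{m}{2} \geq 2$. Consequently every vertex $v_{a,c} \in B_r$ also lies in $B_{\ell_c}$ with $\ell_c < r$, and the subpath property forces $v_{a,c} \in B_{r-1}$, so $B_r \subseteq B_{r-1}$ and $B_r$ may be removed for the required contradiction. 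The symmetric case in which all remaining rows lie to the right of $B_i$ (with boundary case $i = 1$) follows by reversing the decomposition.
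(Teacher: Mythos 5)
Your proof is correct and follows essentially the same strategy as the paper's: force the bag to contain exactly $\frac{m}{2}$ $n$-cliques, extract $n$ extra vertices by a pairing argument when remaining $n$-cliques lie on both sides, and otherwise exhibit a bag contained in its neighbour, contradicting minimum length. The only cosmetic difference is that the paper orients the one-sided case so that the remaining $n$-cliques lie to the \emph{right} of $B_j$ (which guarantees $B_{j+1}$ exists and yields $B_j \subseteq B_{j+1}$ via the column $m$-cliques), thereby avoiding the separate boundary case $i=r$ that you handle with an extra counting argument.
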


\begin{proof} For $m$ even and $n \geq m \geq 4$, let $(B_1,B_2,\dots,B_r)$ be a minimum length path decomposition for which $\max_{1 \leq i \leq r} |B_i| \leq \frac{m}{2}n+\frac{m}{2}-1$.  We first observe that every bag in the decomposition contains at most $\frac{m}{2}$ $n$-cliques; otherwise, some bag contains at least $ (\frac{m}{2}+1)n = \frac{m}{2}n + n \geq \frac{m}{2}n + m > \frac{m}{2}n+\frac{m}{2}-1$ vertices, which yields a contradiction.  

Next, assume that for some $j \in [r]$, bag $B_j$ contains exactly $\frac{m}{2}$ $n$-cliques.  First, suppose there exists $i < j < k$ such that bags $B_i$, $B_k$ each contain at least one $n$-clique that does not appear in $B_j$.  Let $\{v_{\alpha,1},v_{\alpha,2},\dots,v_{\alpha,n}\}$ be such an $n$-clique in $B_i$ and $\{v_{\beta,1},v_{\beta,2},\dots,v_{\beta,n}\}$ such an $n$-clique in $B_k$.  Then, for each pair $v_{\alpha,s},v_{\beta,s}$ with $s \in [n]$, at least one vertex of the pair must be in $B_j$ (else we contradict Definition~\ref{defn}(ii) and (iii)).  Then $|B_j| \geq \frac{m}{2}n+n>\frac{m}{2}n+\frac{m}{2}-1$ which yields a contradiction.  

Thus, wlog no bag of lower index than $j$ contains an $n$-clique not already contained in $B_j$.  However, then no bag of lower index than $j$ contains an $m$-clique not already contain in $B_j$.  Otherwise, for some $x < j$, $B_x$ contains an $m$-clique and each of these $m$ vertices must appear in a bag as part of its associated $n$-clique.  Thus, each of the $m$ vertices (of the $m$-clique of $B_x$) must appear in $B_j$.  Since exactly $\frac{m}{2}$ of them already appear in $B_j$ in an $n$-clique, this means $|B_j| \geq \frac{m}{2}n+\frac{m}{2}$, which yields a contradiction.

Thus $B_j$ contains $\frac{m}{2}$ $n$-cliques, and no lower-indexed bag contains an $n$-clique or an $m$-clique.  As each vertex must appear in a bag with its associated $m$-clique, every vertex in $B_j$ must appear in $B_{j+1}$.  We now have a contradiction as the minimum width decomposition is not of minimum length.  Consequently, every bag in the decomposition contains strictly fewer than $\frac{m}{2}$ $n$-cliques.\end{proof}

\begin{theorem}\label{thm:low}
For $n \geq m \geq 4$ and $m$ even, $pw(K_m \by K_n) \geq  \frac{m}{2}n + \frac{m}{2}  - 1$.\end{theorem}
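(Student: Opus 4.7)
I plan to prove this by contradiction, extending the argument of Theorem~\ref{lem:lowerpw} using the additional structural information provided by Lemma~\ref{lem1}. Suppose $pw(K_m \by K_n) \leq \frac{m}{2}n + \frac{m}{2} - 2$, and by Lemma~\ref{lem1} fix a minimum-width minimum-length path decomposition $(B_1,\ldots,B_r)$ in which every bag contains fewer than $\frac{m}{2}$ $n$-cliques. Assign each of the $m$ $n$-cliques of $K_m \by K_n$ to its lowest-indexed bag; each part of this ordered partition has size less than $\lceil m/2\rceil = m/2$, so Lemma~\ref{lem:subsets} yields an index $t$ splitting the bags into $X = B_1\cup\cdots\cup B_{t-1}$, $B = B_t$, and $Y = B_{t+1}\cup\cdots\cup B_r$ containing $i$, $j$, $k$ assigned $n$-cliques, with $i+j+k=m$, $1 \leq i, k \leq m/2$, and $1 \leq j \leq m/2-1$; WLOG $i \geq k$. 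Reprising the pairing argument of Theorem~\ref{lem:lowerpw} gives $|B| \geq (m-i)n$; if $i < m/2$ this is at least $(m/2+1)n \geq \frac{m}{2}n+m > \frac{m}{2}n+\frac{m}{2}-1$ (since $n \geq m$), a contradiction. Hence $i = m/2$ and $j+k = m/2$.

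The heart of the argument will push $|B|$ past $\frac{m}{2}n + \frac{m}{2} - 1$ in this critical case. For each $\ell \in [n]$ the $m$-clique $M_\ell$ lies in some bag (Lemma~\ref{clique}), and by the subpath property the set of bags containing $M_\ell$ is an interval. Partition $[n]$ into $N_B$ (interval contains $B$), $N_X$ (interval contained in $X$), and $N_Y$ (interval contained in $Y$). A subpath analysis, exploiting the fact that $A$-cliques are homed in $X$, $B'$-cliques are homed in $Y$, and $C$-cliques are homed at $B$, yields: for $\ell \in N_B$, all $m$ vertices of $M_\ell$ lie in $B$; for $\ell \in N_X$, the $B'$- and $C$-vertices of $M_\ell$ (totalling $k+j=m/2$) lie in $B$; and for $\ell \in N_Y$, symmetrically, the $A$- and $C$-vertices of $M_\ell$ (totalling $m/2+j$) lie in $B$. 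Summing over $\ell$,
\[
|B| \;\geq\; \tfrac{m}{2}n \;+\; \tfrac{m}{2}|N_B| \;+\; j\,|N_Y|.
\]
Whenever $|N_B| \geq 1$ or $j|N_Y| \geq m/2$ this exceeds $\frac{m}{2}n+\frac{m}{2}-1$, a contradiction.

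The remaining case is $N_B = \emptyset$ together with $j|N_Y| < m/2$. Here I plan to exploit the minimum-length property: when $N_B=\emptyset$, no vertex can appear solely in $B$, since such a vertex's $m-1$ column-neighbours would all be forced into $B$, producing an $m$-clique $M_\ell \subseteq B$ and hence $\ell \in N_B$. Consequently $B \subseteq B_{t-1}\cup B_{t+1}$, and non-removability of $B$ (required by minimum length) must be witnessed by an edge of $K_m \by K_n$ whose endpoints' subpath-ranges meet only at $t$. Structural analysis of such a witness-edge, combined with the restriction from Lemma~\ref{lem1} that neighbouring bags also contain fewer than $\frac{m}{2}$ $n$-cliques, is what I expect to generate the missing $\frac{m}{2} - j|N_Y|$ vertices either in $B$ or in an adjacent bag, completing the contradiction.

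The principal obstacle is this last step: the witness-edge argument yields only a local constraint on a single pair of vertices, while the remaining gap $\frac{m}{2} - j|N_Y|$ may be as large as $\frac{m}{2}-1$. Bridging it cleanly will likely require either a careful case analysis on the triple $(j,k,|N_Y|)$, a joint consideration of $(B_{t-1},B,B_{t+1})$ rather than $B$ alone, or an iteration of the Lemma~\ref{lem:subsets}-based trisection applied to a refined portion of the decomposition.
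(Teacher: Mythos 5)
Your setup is sound and matches the paper's: the contradiction hypothesis, the appeal to Lemma~\ref{lem1} and Lemma~\ref{lem:subsets}, the trisection into $X$, $B$, $Y$, and the pairing bound $|B|\geq (m-i)n$ are all correct, and your per-column refinement $|B|\geq \frac{m}{2}n+\frac{m}{2}|N_B|+j|N_Y|$ is a legitimate (indeed sharper) version of the counting. But the proposal is not a proof: the case $N_B=\emptyset$ with $j|N_Y|\leq \frac{m}{2}-1$ is left open, and what you offer there is a plan (``I expect \dots to generate the missing $\frac{m}{2}-j|N_Y|$ vertices''), not an argument. As you yourself note, the witness-edge/non-removability idea constrains only a single pair of vertices, while the deficit can be as large as $\frac{m}{2}-1$; nothing in the sketch bridges that. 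So the heart of the theorem is missing.

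The paper closes the argument at an earlier point and never isolates the case $i=\frac{m}{2}$. It uses the minimum-length property to show that $X$ and $Y$ \emph{each} contain an $m$-clique (a column) not appearing in $B$; pairing these two columns row by row forces, for each $\ell\in[m]$, at least one of $v_{\ell,x},v_{\ell,y}$ into $B$, and the paper argues that at most $j+k$ of these $m$ forced vertices were already counted in the $(j+k)n$ bound, yielding $|B|\geq (m-i)n+i\geq \frac{m}{2}n+\frac{m}{2}$ for every $i\in[\frac{m}{2}]$. That two-column $m$-clique pairing, justified by minimality of the decomposition's length, is the ingredient you are missing. A caution if you import it: your own finer accounting shows that for $x\in N_X$ the $B'$- and $C$-row vertices of column $x$ are already in your count, and for $y\in N_Y$ the $A$- and $C$-row vertices of column $y$ are, so the row-by-row pairing of those two columns does not obviously contribute $i$ \emph{new} vertices on top of $\frac{m}{2}n+j|N_Y|$. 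Reconciling the paper's ``at most $j+k$ already appear'' claim with your column-by-column tally is precisely the step you would need to nail down to finish.
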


\begin{proof} Suppose $n \geq m \geq 4$ and $m$ is even.  For a contradiction, let $(B_1,B_2,\dots,B_r)$ be a minimum length path decomposition for which $\max_{1 \leq i \leq r} |B_i| \leq \frac{m}{2}n+\frac{m}{2}-1$.  As a result of Lemma~\ref{lem1}, we can apply Lemma~\ref{lem:subsets}; let $S$ be the set of $m$  $n$-cliques in $K_m \by K_n$.   Let  $X = B_1 \cup B_2 \cup \dots \cup B_{t-1}$ contain $i$ $n$-cliques for some $i \in [  \frac{m}{2}]$, $B= B_t$ contain $j$ $n$-cliques for some $j \in [  \frac{m}{2}]$,  and $Y = B_{t+1} \cup B_{t+2} \cup \dots \cup B_r$ contain $k$ $n$-cliques for some $k \in [ \frac{m}{2} ]$.   Suppose wlog that $i \geq k$.  

We now show that $X$ and $Y$ each must contain at least one $m$-clique that does not appear in $B$.  To see this, suppose that $X$ contains no $m$-clique: all $m$-cliques appear in $B \cup Y$.  As each vertex must appear in a bag with its associated $m$-clique, it is clear that any vertex of $X$ must also appear in $B$ (else we contradict Definition~\ref{defn}(iii)).  Then $X$ is  unnecessary in the path decomposition, which contradicts the assumption of having a minimum width path decomposition that is of minimum length.  Clearly the same argument ensures $X$ does not contain all the $m$-cliques.  Consequently, $X$, $Y$ each contain at least one $m$-clique.  

We pair the $k$ $n$-cliques in $Y$ with $k$ $n$-cliques in $X$.  If $\{v_{a,1},v_{a,2},\dots, v_{a,n}\}$ in $X$ is paired with $\{v_{b,1},v_{b,2},\dots,v_{b,n}\}$  in $Y$, then $B$ must contain at least one of $v_{a,\ell},v_{b,\ell}$, for each $\ell \in [n]$ (else we contradict Definition~\ref{defn}(iii)).  Thus, $|B| \geq (j+k)n$.  

Recall that $X$, $Y$ must each contain at least one $m$-clique that does not appear in $B$; let  \linebreak $\{v_{1,x},v_{2,x},\dots,v_{m,x}\}$ be such an $m$-clique in $X$ and  $\{v_{1,y},v_{2,y},\dots,v_{m,y}\}$ such an $m$-clique in $Y$.  At least one vertex from each pair $v_{\ell,x},v_{\ell,y}$, for $\ell \in [m]$, must appear in $B$, and at most $j+k$ of these $m$ vertices already appear in $B$.  This leaves an additional $m-(j+k)=i$ vertices.  Thus, $|B| \geq (j+k)n+i = (m-i)n+i = mn-in+i \geq \frac{m}{2}n+\frac{m}{2}$, as $1 \leq i \leq \frac{m}{2}$.  However, this contradicts the initial assumption $pw(K_m \by K_n) \leq \frac{m}{2}n + \frac{m}{2}  - 2$.\end{proof}


\subsection{Upper Bound for the Pathwidth of the Product of Cliques}\label{sec:upper}


We now provide the upper bounds on the pathwidth of the product of cliques.  Theorems~\ref{thm:up} and~\ref{thm:odd} provide upper bounds for even and odd $m$.  

\begin{theorem}\label{thm:up}
For $n \geq m \geq 2$ and $m$ even, $pw(K_m \by K_n) \leq \frac{m}{2} n + \frac{m}{2} - 1$.
\end{theorem}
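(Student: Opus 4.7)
The plan is to exhibit an explicit path decomposition of $K_m \by K_n$ whose bags all have size at most $\frac{m}{2}(n+1)$. The idea I would use is a rolling construction: begin with the ``top half'' of the grid (rows $1$ through $m/2$) sitting in a single bag, and then, column by column, introduce the ``bottom half'' entries $v_{m/2+1,j},\ldots,v_{m,j}$ of the current column $j$ one vertex at a time, and afterwards flush out the corresponding top-half entries $v_{1,j},\ldots,v_{m/2,j}$ of the same column one at a time.

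Concretely, I would let the initial bag be $B^{(0)} = \{v_{i,j} : 1 \leq i \leq m/2,\ 1 \leq j \leq n\}$, and then, for each $j = 1, 2, \ldots, n$, form $m/2$ further bags by successively adding $v_{m/2+1,j}, v_{m/2+2,j}, \ldots, v_{m,j}$ and (if $j < n$) another $m/2$ bags by successively removing $v_{1,j}, v_{2,j}, \ldots, v_{m/2,j}$. To verify this is a valid path decomposition I would check the three conditions of Definition~\ref{defn}: (i) every vertex appears in some bag, since top-half vertices lie in $B^{(0)}$ and each bottom-half vertex is added in the phase of its own column; (ii) each $n$-clique with $i \leq m/2$ lies in $B^{(0)}$, each $n$-clique with $i > m/2$ lies in the final bag (because bottom-half vertices, once added, are never removed), and the $m$-clique of column $j$ lies in the ``peak'' bag reached immediately after $v_{m,j}$ is added but before any removal for column $j$ begins; and (iii) the subpath condition holds because each vertex is added at most once and removed at most once, making its set of appearances a contiguous block.

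The main calculation is the peak bag-size count. At the peak for column $j$, the bag contains rows $1,\ldots,m/2$ restricted to columns $\geq j$ (contributing $\frac{m}{2}(n-j+1)$ vertices) together with rows $m/2+1,\ldots,m$ restricted to columns $\leq j$ (contributing $\frac{m}{2}j$ vertices), totalling $\frac{m}{2}(n+1)$ independent of $j$, while every non-peak bag is strictly smaller. This yields pathwidth at most $\frac{m}{2}(n+1) - 1 = \frac{m}{2}n + \frac{m}{2} - 1$, matching the stated upper bound. The only subtle point I expect is making sure that all removals for a given column occur \emph{after} all additions for that column, since this is precisely what keeps the peak size uniform across $j$ and keeps every top-half vertex that is eventually removed in a single contiguous interval of bags.
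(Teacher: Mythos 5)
Your construction is correct and is essentially the paper's own: your ``peak'' bags (top half of columns $\geq j$ together with bottom half of columns $\leq j$) are exactly the bags $B_1,\dots,B_n$ the paper writes down directly, and your one-vertex-at-a-time intermediate bags are just a refinement that does not change the width. The size count $\frac{m}{2}(n+1)$ per peak bag and the verification of the three conditions match the paper's argument.
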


\begin{proof} For $k \in [n]$, let \begin{displaymath}B_k = \bigcup_{i = k}^n \{v_{1,i},v_{2,i},\dots,v_{\frac{m}{2}, i} \} ~\cup ~\bigcup_{i=1}^k \{v_{\frac{m}{2}+1,i}, v_{\frac{m}{2} +2,i}, \dots, v_{m,i}\}.\end{displaymath}   Observe that each bag contains $\frac{m}{2} n + \frac{m}{2}$ vertices.  We now verify that  $(B_1,B_2,\dots,B_n)$ is a path decomposition.  Consider arbitrary vertex $v_{x,y} \in V(K_m \by K_n)$.  Clearly $v_{x,y} \in B_y$, so $(B_1,B_2,\dots,B_n)$ satisfies condition (i) of Definition~\ref{defn}.

Let $v_{s,t}$ be a vertex adjacent to $v_{x,y}$.  From the definition of the Cartesian product, either $s=x$ or $t=y$.  If $t=y$ then, by the previous paragraph, $v_{s,t}, v_{x,y} \in B_y$.  If $s=x$ then wlog $1 \leq t < y \leq n$.  If $s = x \geq  \frac{m}{2} +1$, then $v_{x,y}, v_{s,t}$ are both in bag $B_y$ as \begin{displaymath}v_{s,t} \in  \bigcup_{i=1}^y \{v_{\frac{m}{2}+1,i}, v_{\frac{m}{2} +2,i}, \dots, v_{m,i}\} \subseteq B_y.\end{displaymath}  If $s = x \leq  \frac{m}{2} $, then $v_{s,t}$, $v_{x,y}$ are both in bag $B_t$ as \begin{displaymath}v_{x,y} \in \bigcup_{i = t}^n \{v_{1,i},v_{2,i},\dots,v_{\frac{m}{2}, i} \} \subseteq B_t.\end{displaymath}    Thus, $(B_1,B_2,\dots,B_n)$ satisfies condition (ii) of Definition~\ref{defn}.

To verify condition (iii) of Definition~\ref{defn}, we assume  $v_{x,y} \in B_p$, $v_{x,y} \notin B_q$, and $v_{x,y} \in B_r$, for $1 \leq p < q < r \leq n$, and seek a contradiction.  As $p < q < n$, if $x \geq \frac{m}{2}+1$, then  \begin{displaymath}v_{x,y} \in \bigcup_{i=1}^p \{v_{\frac{m}{2}+1,i}, v_{\frac{m}{2}+2,i}, \dots, v_{m,i}\} \subseteq B_p \text{ ~implies~ } v_{x,y} \in \bigcup_{i=1}^q \{v_{\frac{m}{2}+1,i}, v_{\frac{m}{2}+2,i}, \dots, v_{m,i}\} \subseteq B_q\end{displaymath} and a contradiction is obtained. As $q < r \leq n$, if $x \leq  \frac{m}{2}$, then \begin{displaymath}v_{x,y} \notin \bigcup_{i = q}^n \{v_{1,i},v_{2,i},\dots,v_{\frac{m}{2}, i} \} \subseteq B_q  \text{ ~implies~ } v_{x,y} \notin  \bigcup_{i = r}^n \{v_{1,i},v_{2,i},\dots,v_{\frac{m}{2}, i} \} \subseteq B_r\end{displaymath} and a contradiction is obtained.  Therefore, $(B_1,B_2,\dots,B_n)$ satisfies condition (iii) of Definition~\ref{defn}.\end{proof}


\begin{theorem}\label{thm:odd}
For $n \geq m \geq 3$ and $m$ odd, $pw(K_m \by K_n) \leq \lceil \frac{m}{2} \rceil n - 1$.
\end{theorem}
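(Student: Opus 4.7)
The plan is to construct an explicit path decomposition of $K_m \by K_n$ whose bags all have at most $\lceil m/2 \rceil n = (m+1)n/2$ vertices. The construction parallels the even case of Theorem~\ref{thm:up}, but since $m$ is odd the middle row cannot be split symmetrically and must be treated via a dedicated ``central'' bag. Set $q = (m-1)/2$ and $k = n - q$, so $k \geq q+1$ because $n \geq m = 2q+1$.

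First, a \textbf{left phase} starts from the bag $A_0 = \{v_{i,j} : 1 \leq i \leq q,\ 1 \leq j \leq n\}$ of size $qn$. For each column $j = 1, 2, \ldots, k$ in turn, I would sequentially add $v_{q+1,j}, v_{q+2,j}, \ldots, v_{m,j}$ (one per bag) and then sequentially remove $v_{1,j}, v_{2,j}, \ldots, v_{q,j}$ (one per bag); the bag reached immediately after the last addition for column $j$ contains the whole column-$j$ clique and has size $qn + (j-1) + (q+1) = qn + j + q$, which is at most $(q+1)n$ for $j \leq k = n-q$. A \textbf{central phase} then continues from the end-bag $L$ of the left phase (of size $qn+k$): add $v_{q+1, k+1}, \ldots, v_{q+1, n}$ one by one to reach a central bag $M$ of size $(q+1)n$ that contains the row-$(q+1)$ clique, and then remove $v_{q+1, 1}, \ldots, v_{q+1, k}$ one by one to reach a bag $R$ of size $qn+q$. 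Finally, a \textbf{right phase} is defined as the mirror image of the left: starting from $A_0' = \{v_{i,j} : q+2 \leq i \leq m,\ 1 \leq j \leq n\}$ (size $qn$) and processing columns $n, n-1, \ldots, k+1$ (adding top $+$ middle, removing bottom), the forward-direction final bag coincides with $R$; I would reverse this sequence and append it after $R$ in the decomposition.

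A direct calculation shows that every bag has size at most $(q+1)n = \lceil m/2 \rceil n$: the left-phase peak is bounded above by the formula, the central bag $M$ meets the bound exactly, and the forward right-phase peaks are $qn + \ell + q \leq qn + 2q \leq (q+1)n$ since $\ell \leq n-k = q$ and $n \geq 2q+1$. Verification of Definition~\ref{defn} is then routine: every row-clique lies in $A_0$ (rows $1, \ldots, q$), $M$ (row $q+1$), or $A_0'$ (rows $q+2, \ldots, m$); every column-clique lies in the corresponding left- or right-phase peak bag, so every edge is covered; and every vertex is added and removed at most once, so its set of bags forms a contiguous sub-path. The main obstacle I anticipate is the bookkeeping at the phase boundaries: the choice $k = n-q$ is precisely what makes the left-phase peak reach the bound $(q+1)n$ at column $k$ while keeping the forward right-phase peaks within $(q+1)n$, and one must verify that the bag $R$ reached at the end of the central phase coincides, as a set of vertices, with the bag produced by the forward right phase. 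This last check reduces to a direct set-theoretic computation.
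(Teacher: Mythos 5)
Your construction is correct, and it is a genuinely different path decomposition from the one in the paper. The paper builds a short decomposition of length $\lceil \frac{n}{2}\rceil + \lceil \frac{m}{2}\rceil$: the first $\lceil \frac{n}{2}\rceil$ bags sweep the first half of the column-cliques (top $\lfloor \frac{m}{2}\rfloor$ rows restricted to a receding suffix of columns, bottom $\lceil \frac{m}{2}\rceil$ rows to a growing prefix), the next $\lfloor \frac{m}{2}\rfloor$ bags each carry one full row as it migrates from the ``prefix'' side to the ``suffix'' side, and a single final bag contains all of row $m$ together with rows $1,\dots,m-1$ on the last $\lfloor \frac{n}{2}\rfloor$ columns, which is where the remaining column-cliques get covered. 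Your decomposition instead splits the columns at $k=n-\lfloor \frac{m}{2}\rfloor$ rather than at $\lceil \frac{n}{2}\rceil$, covers columns $1,\dots,k$ in a left sweep anchored on the top $q=\lfloor \frac{m}{2}\rfloor$ rows, the middle row-clique in one central bag $M$ of size exactly $(q+1)n$, and columns $k+1,\dots,n$ in a mirrored right sweep anchored on the bottom $q$ rows. I checked the phase-boundary issue you flag: the forward right-phase terminal bag is rows $q+2,\dots,m$ on columns $1,\dots,k$ together with rows $1,\dots,q+1$ on columns $k+1,\dots,n$, which is exactly $R$, and all the size bounds ($qn+j+q\le (q+1)n$ for $j\le n-q$ on the left, $qn+2q\le (q+1)n$ on the right since $n\ge 2q+1$) hold. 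What your version buys is a cleaner verification of Definition~\ref{defn}(iii): consecutive bags differ by one vertex and each vertex is added and removed at most once, so contiguity is immediate, whereas the paper's compact formulas require a three-case analysis of which vertices drop out between $B_p$ and $B_{p+1}$. What the paper's version buys is brevity of the decomposition itself and a closed-form description of every bag.
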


\begin{proof} For $1 \leq k \leq \lceil \frac{n}{2} \rceil$, let \begin{equation}\label{1}B_k = \bigcup_{i=1}^{\slfloor \frac{m}{2} \srfloor} \{v_{i,k}, v_{i,k+1},\dots, v_{i,n}\} \cup \bigcup_{i=1}^k \{v_{\slceil \frac{m}{2} \srceil,i}, v_{\slceil \frac{m}{2} \srceil+1,i}, \dots, v_{m,i}\},\end{equation} for $\lceil \frac{n}{2} \rceil + 1 \leq k \leq \lceil \frac{n}{2} \rceil + \lfloor \frac{m}{2} \rfloor$, let 
\begin{equation}\label{2} B_k = \hspace{-2mm} \bigcup_{i = 1}^{k-\slceil\frac{n}{2} \srceil+ \slfloor \frac{m}{2} \srfloor - 1} \hspace{-2mm} \{v_{i, \slceil \frac{n}{2} \srceil + 1}, v_{i, \slceil \frac{n}{2} \srceil + 2}, \dots, v_{i, n}\} \cup~ \bigcup_{i=1}^n  \{v_{k-\slceil \frac{n}{2} \srceil + \slfloor \frac{m}{2} \srfloor, i}\} \cup ~ \hspace{-2mm} \bigcup_{i = k - \slceil \frac{n}{2} \srceil + \slceil \frac{m}{2} \srceil}^{m} \hspace{-2mm} \{v_{i, 1}, v_{i, 2}, \dots v_{i, \slceil \frac{n}{2} \srceil}\}\end{equation} and for $k = \lceil \frac{n}{2} \rceil + \lceil \frac{m}{2} \rceil$, let \begin{equation}\label{3}B_k = \bigcup_{i = 1}^{m - 1} \{v_{i, \slceil \frac{n}{2} \srceil + 1}, v_{i, \slceil \frac{n}{2} \srceil + 2}, \dots, v_{i, n}\} \cup \{v_{m, 1}, v_{m, 2}, \dots, v_{m, n}\}.\end{equation}  We now verify that $(B_1,B_2,\dots,B_{\slceil \frac{n}{2}\srceil + \slceil \frac{m}{2} \srceil})$ is, in fact, a path decomposition.  

Consider an arbitrary vertex $v_{x,y} \in V(K_m \by K_n)$.  If  $1 \leq y \leq \lceil \frac{n}{2} \rceil$ then from (\ref{1}), $v_{x,y} \in B_y.$   If  $\lceil \frac{n}{2}\rceil +1 \leq y \leq n$ then from~(\ref{3}), $v_{x,y} \in  B_{\slceil \frac{n}{2}\srceil + \slceil \frac{m}{2} \srceil}.$ Thus, $(B_1,B_2,\dots,B_{\slceil \frac{n}{2}\srceil + \slceil \frac{m}{2} \srceil})$ satisfies condition (i) of Definition~\ref{defn}. 

Let $v_{s,t}$ be a vertex adjacent to $v_{x,y}$.  From the definition of the Cartesian product, either $s=x$ or $t=y$.  First, suppose $t=y$ and wlog $s<x$.  If $1 \leq y \leq \lceil \frac{n}{2} \rceil$ then from~(\ref{1}), $v_{s,t}$, $v_{x,y} \in B_y$. If $\lceil \frac{n}{2} \rceil +1 \leq y \leq n$, then $v_{s,y}, v_{x,y} \in B_{\slceil \frac{n}{2}\srceil+\slceil \frac{m}{2} \srceil}$.   Second, suppose $s=x$ and wlog $t < y$.   If $1 \leq t \leq \lceil \frac{n}{2}\rceil$ and $1 \leq x \leq \lfloor \frac{m}{2} \rfloor$, then by (\ref{1}), $$v_{x,y},v_{x,t} \in \bigcup_{i=1}^{\slfloor \frac{m}{2}\srfloor} \{v_{i,t},v_{i,t+1},\dots, v_{i,n}\} \subseteq B_t.$$  If $1 \leq t \leq \lceil \frac{n}{2}\rceil$ and $\lfloor \frac{m}{2}\rfloor + 1 \leq x < m$, then observe $\lceil \frac{n}{2}\rceil+1 \leq x+ \lceil \frac{n}{2}\rceil + \lfloor \frac{m}{2}\rfloor < \lceil \frac{n}{2}\rceil + \lceil \frac{m}{2} \rceil$.  So by~(\ref{2}), $$v_{x,y},v_{x,t} \in \bigcup_{i=1}^n \{v_{x,i}\} \subseteq B_{x +\slceil \frac{n}{2} \srceil-\slfloor \frac{m}{2}\srfloor}.$$  If $\lceil \frac{n}{2}\rceil \leq t \leq n$ and $1 \leq x < m$ then by~(\ref{3}), $$v_{x,y},v_{x,t} \in \bigcup_{i=1}^{m-1} \{v_{i,\slceil \frac{n}{2} \srceil +1}, v_{i,\slceil \frac{n}{2}\srceil+2},\dots,v_{i,n}\} \subseteq B_{\slceil \frac{n}{2}\srceil+\slceil \frac{m}{2}\srceil}.$$  Finally, if $x=m$ then by~(\ref{3}), $$v_{x,y},v_{x,t} \in \{v_{m,1},v_{m,2},\dots,v_{m,n}\} \subseteq B_{\slceil \frac{n}{2} \srceil+\slceil \frac{m}{2} \srceil}.$$

To verify condition (iii) of Definition~\ref{defn}, we assume $v_{x,y} \in B_p$, $v_{x,y} \notin B_{p+1}$, $v_{x,y} \in B_r$, for $1 \leq p < p+1<r \leq \lceil \frac{n}{2}\rceil + \lceil \frac{m}{2}\rceil$, and seek a contradiction.\vspace{0.1in}

(a) Suppose $1 \leq p < p+1 \leq \lceil \frac{n}{2}\rceil$.  Then by~(\ref{1}), \begin{displaymath}v_{x,y} \in B_p = \bigcup_{i=1}^{\slfloor \frac{m}{2} \srfloor} \{v_{i,p}, v_{i,p+1},\dots, v_{i,n}\} \cup \bigcup_{j=1}^p \{v_{\slceil \frac{m}{2} \srceil,j}, v_{\slceil \frac{m}{2} \srceil+1,j}, \dots, v_{m,j}\},\end{displaymath} \begin{displaymath}v_{x,y} \notin B_{p+1} = \bigcup_{i=1}^{\slfloor \frac{m}{2} \srfloor} \{v_{i,p+1}, v_{i,p+2},\dots, v_{i,n}\} \cup \bigcup_{i=1}^{p+1} \{v_{\slceil \frac{m}{2} \srceil,i}, v_{\slceil \frac{m}{2} \srceil+1,i}, \dots, v_{m,i}\}.\end{displaymath}  Clearly, the vertices in $B_p$ that are not in $B_{p+1}$ are simply $\{v_{1,p},v_{2,p},\dots,v_{\slfloor \frac{m}{2}\srfloor,p}\}$. It is easy to see for any choice of $r$ where $1 \leq p<p+1<r \leq \lceil\frac{n}{2}\rceil + \lceil \frac{m}{2}\rceil$, \begin{equation}\label{ww}\{v_{1,p},v_{2,p},\dots,v_{\slfloor \frac{m}{2}\srfloor,p}\} \cap B_r = \emptyset.\end{equation}  In particular, we note that if $\lceil \frac{n}{2}\rceil+1 \leq r \leq \lceil \frac{n}{2}\rceil+\lfloor\frac{m}{2}\rfloor$,~(\ref{ww}) follows because $k-\lceil \frac{n}{2}\rceil+\lfloor\frac{m}{2}\rfloor \geq \lceil \frac{m}{2}\rceil$.\vspace{0.1in}

(b) Suppose $1 \leq p \leq \lceil \frac{n}{2}\rceil < p+1 < \lceil \frac{n}{2}\rceil + \lceil \frac{m}{2}\rceil$.  Then we may consider $p=\lceil \frac{n}{2} \rceil$ and $v_{x,y} \in B_{\slceil \frac{n}{2}\srceil}$, but $v_{x,y} \notin B_{\slceil \frac{n}{2}\srceil+1}$.  In this case (using~(\ref{1}) and~(\ref{2})), the vertices in $B_{\slceil \frac{n}{2}\srceil}$ that are not in $B_{\slceil \frac{n}{2}\srceil +1}$ are simply $\{v_{1,\slceil \frac{n}{2} \srceil}, v_{2,\slceil \frac{n}{2} \srceil}, \dots, v_{\slfloor \frac{m}{2} \srfloor, \slceil \frac{n}{2} \srceil}\}$.  Clearly by~(\ref{3}) \begin{displaymath}\{v_{1,\slceil \frac{n}{2} \srceil}, v_{2,\slceil \frac{n}{2} \srceil}, \dots, v_{\slfloor \frac{m}{2} \srfloor, \slceil \frac{n}{2} \srceil}\} \cap B_{\slceil \frac{n}{2} \srceil+\slceil \frac{m}{2}\srceil} = \emptyset\end{displaymath} so $r \neq \rceil \frac{n}{2}\rceil + \lceil \frac{m}{2}\rceil$.  Therefore, $B_r$ is given by~(\ref{2}).  However, as $r > p+1 \geq \lceil \frac{n}{2}\rceil+1$, we know $r \geq \lceil \frac{n}{2}\rceil+2$.  This implies $r - \lceil \frac{n}{2}\rceil + \lfloor \frac{m}{2}\rfloor > \lfloor \frac{m}{2}\rfloor$ and as a result, \begin{displaymath}\{v_{1,\slceil \frac{n}{2} \srceil}, v_{2,\slceil \frac{n}{2} \srceil}, \dots, v_{\slfloor \frac{m}{2} \srfloor, \slceil \frac{n}{2} \srceil}\} \cap B_r = \emptyset.\end{displaymath}\vspace{0.1in}

(c) Suppose $\lceil \frac{n}{2}\rceil +1 \leq p < p+1< \lceil \frac{n}{2}\rceil + \lceil \frac{m}{2}\rceil$. Then \begin{displaymath}B_p = \hspace{-2.5mm}\bigcup_{i = 1}^{p-\slceil\frac{n}{2} \srceil+ \slfloor \frac{m}{2} \srfloor - 1} \hspace{-2.5mm} \{v_{i, \slceil \frac{n}{2} \srceil + 1}, v_{i, \slceil \frac{n}{2} \srceil + 2}, \dots, v_{i, n}\} \cup \bigcup_{i=1}^n \{v_{p-\slceil \frac{n}{2} \srceil + \slfloor \frac{m}{2} \srfloor, i}\} \cup ~ \hspace{-1mm} \bigcup_{i = p - \slceil \frac{n}{2} \srceil + \slceil \frac{m}{2} \srceil}^{m} \hspace{-1mm} \{v_{i, 1}, v_{i, 2}, \dots v_{i, \slceil \frac{n}{2} \srceil}\},\end{displaymath}   \begin{displaymath}B_{p+1} = \hspace{-2mm} \bigcup_{i = 1}^{p-\slceil\frac{n}{2} \srceil+ \slfloor \frac{m}{2} \srfloor} \hspace{-2mm} \{v_{i, \slceil \frac{n}{2} \srceil + 1}, v_{i, \slceil \frac{n}{2} \srceil + 2}, \dots, v_{i, n}\} \cup \bigcup_{i=1}^n \{v_{p-\slceil \frac{n}{2} \srceil + \slfloor \frac{m}{2} \srfloor+1, i}\} \cup ~\hspace{-5mm} \bigcup_{i = p - \slceil \frac{n}{2} \srceil + \slceil \frac{m}{2} \srceil+1}^{m} \hspace{-5mm} \{v_{i, 1}, v_{i, 2}, \dots v_{i, \slceil \frac{n}{2} \srceil}\}\end{displaymath}  and by inspection, the vertices in $B_p$ that are not in $B_{p+1}$ are simply \begin{displaymath}B_p \backslash B_{p+1} = \bigcup_{i=1}^{\slceil \frac{n}{2}\srceil} \{v_{p-\slceil\frac{n}{2}\srceil+\slfloor \frac{m}{2}\srfloor,i}\}.\end{displaymath}  Clearly by~(\ref{3}), $(B_p \backslash B_{p+1}) \cap B_{\slceil \frac{n}{2}\srceil + \slceil \frac{m}{2}\srceil} = \emptyset$, so $r \neq \lceil \frac{n}{2}\rceil+\lceil \frac{m}{2}\rceil$.  Then by~(\ref{2}), \begin{displaymath} B_r = \hspace{-2mm} \bigcup_{i = 1}^{r-\slceil\frac{n}{2} \srceil+ \slfloor \frac{m}{2} \srfloor - 1} \hspace{-2mm} \{v_{i, \slceil \frac{n}{2} \srceil + 1}, v_{i, \slceil \frac{n}{2} \srceil + 2}, \dots, v_{i, n}\} \cup~ \bigcup_{i=1}^n  \{v_{r-\slceil \frac{n}{2} \srceil + \slfloor \frac{m}{2} \srfloor, i}\} \cup ~ \hspace{-2mm} \bigcup_{i = r - \slceil \frac{n}{2} \srceil + \slceil \frac{m}{2} \srceil}^{m} \hspace{-2mm} \{v_{i, 1}, v_{i, 2}, \dots v_{i, \slceil \frac{n}{2} \srceil}\}.\end{displaymath}  Since $r>p+1$, we know $r-\lceil \frac{n}{2}\rceil + \lceil \frac{m}{2}\rceil > p - \lceil \frac{n}{2}\rceil + \lfloor \frac{m}{2}\rfloor$ and thus, by inspection conclude $(B_p \backslash B_{p+1}) \cap B_r = \emptyset$.  Note that condition (iii) of Definition~\ref{defn} is satisfied.

Therefore, $(B_1,B_2,\dots, B_{\slceil \frac{n}{2}\srceil + \slceil \frac{m}{2}\srceil})$ forms a path decomposition.  As $n \geq m \geq 3$, counting the number of vertices in $B_k$ for~(\ref{1}),~(\ref{2}), and~(\ref{3}) finds $|B_k| \leq \lceil \frac{m}{2}\rceil n$.  \end{proof}


\subsection{The Pathwidth of the Product of Cliques}\label{sec:end}


The following corollary is immediate from Lemma~\ref{lem:k2} and Theorems \ref{lem:lowerpw}, \ref{thm:low}--\ref{thm:odd}.

\begin{corollary}\label{cor:paths} For $n \geq m \geq 2$, \begin{displaymath}pw(K_m \by K_n) = \begin{cases} \frac{m}{2}n + \frac{m}{2} -1 & \text{ if $m$ even} \\ \lceil \frac{m}{2} \rceil n -1 & \text{ if $m$ odd.}\end{cases}\end{displaymath} \end{corollary}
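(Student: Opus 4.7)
The plan is to assemble the corollary directly from the previously established results; no new argument is required, only bookkeeping to confirm that every case $n \geq m \geq 2$ is covered and that the lower and upper bounds coincide. I would split into cases on the parity of $m$ and dispose of the small case $m = 2$ separately.

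First I would handle $m = 2$. Lemma~\ref{lem:k2} gives $pw(K_2 \by K_n) \geq n$, and this agrees with the even formula since $\tfrac{m}{2} n + \tfrac{m}{2} - 1 = n$ when $m = 2$. The matching upper bound $pw(K_2 \by K_n) \leq n$ is supplied by Theorem~\ref{thm:up}, whose hypothesis reads $n \geq m \geq 2$, so plugging in $m = 2$ works directly.

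For $m \geq 4$ even, I would combine Theorem~\ref{thm:low} (lower bound $\tfrac{m}{2} n + \tfrac{m}{2} - 1$) with Theorem~\ref{thm:up} (matching upper bound). For $m$ odd, I would combine Theorem~\ref{lem:lowerpw} (lower bound $\lceil m/2 \rceil n - 1$) with Theorem~\ref{thm:odd} (matching upper bound). The only subtlety is that Theorem~\ref{lem:lowerpw} is stated for $m \geq 4$, so the case $m = 3$ needs a brief remark: the counting argument of Theorem~\ref{lem:lowerpw} relies only on Lemma~\ref{lem:subsets}, which is valid for all $m \geq 3$, and carries over unchanged to give $pw(K_3 \by K_n) \geq 2n - 1$, matching the odd formula.

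The main obstacle, such as it is, is purely to verify that the algebraic expressions in each cited bound coincide with the piecewise formula in the corollary and that the union of the hypothesis ranges covers every $n \geq m \geq 2$. Since no new path decompositions or separator counts need to be constructed, the proof reduces to a one-line appeal to Lemma~\ref{lem:k2} together with Theorems~\ref{lem:lowerpw} through~\ref{thm:odd}, handled in the three cases $m = 2$, $m \geq 3$ odd, and $m \geq 4$ even.
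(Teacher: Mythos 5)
Your assembly is correct and matches the paper's, which simply declares the corollary immediate from Lemma~\ref{lem:k2} and Theorems~\ref{lem:lowerpw}, \ref{thm:low}--\ref{thm:odd}. Your remark on $m=3$ is in fact a necessary patch rather than a pedantic one: Theorem~\ref{lem:lowerpw} is stated only for $n \geq m \geq 4$, so the lower bound $pw(K_3 \by K_n) \geq 2n-1$ is not literally covered by any cited result, and your observation that its counting argument goes through verbatim for $m=3$ (each bag of size at most $2n-1$ holds at most one $n$-clique, forcing at least three non-empty parts so that Lemma~\ref{lem:subsets} applies and yields $|B| \geq 2n$, a contradiction) is correct and closes that small gap.
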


Our final results follow directly from Corollary~\ref{cor:1}(a), Lemma~\ref{lem:x}, and Corollary~\ref{cor:paths}. Corollary~\ref{SearchClique} bounds the search number of the Cartesian product of cliques to within $2$ and improves the bound of~\cite{YDA}, given in Inequality~(\ref{eq}), by half. Corollary~\ref{end} provides the lower bound for the search number of the product of two general graphs $G$ and $H$.
 
  \begin{corollary}\label{SearchClique} For $n \geq m \geq 2$, if $m$ is even, then \begin{displaymath}\frac{m}{2}n+\frac{m}{2}-1 \leq s(K_m \by K_n) \leq \frac{m}{2}n+\frac{m}{2}+1\end{displaymath} and, if $m$ is odd, then \begin{displaymath}\Big\lceil \frac{m}{2}\Big\rceil n-1 \leq s(K_m \by K_n) \leq \Big\lceil \frac{m}{2}\Big\rceil n + 1.\end{displaymath} \end{corollary}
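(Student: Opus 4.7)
The plan is to obtain the corollary as an immediate consequence of the two main tools developed earlier in the section: the sandwich relation between search number and pathwidth, and the exact value of the pathwidth of a product of two cliques. Specifically, Lemma~\ref{lem:x}(b) supplies
\[
pw(K_m \by K_n) \;\leq\; s(K_m \by K_n) \;\leq\; pw(K_m \by K_n)+2
\]
for $n \geq 1$ and $m \geq 2$, and Corollary~\ref{cor:paths} evaluates the middle quantity exactly in terms of the parity of $m$. So the proof is a two-line substitution; there is no genuine obstacle to overcome.

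First I would treat the case that $m$ is even and $n \geq m \geq 2$. Here Corollary~\ref{cor:paths} gives $pw(K_m \by K_n) = \tfrac{m}{2}n + \tfrac{m}{2} - 1$. Plugging this into the chain provided by Lemma~\ref{lem:x}(b) yields
\[
\tfrac{m}{2}n + \tfrac{m}{2} - 1 \;\leq\; s(K_m \by K_n) \;\leq\; \tfrac{m}{2}n + \tfrac{m}{2} + 1,
\]
which is exactly the first displayed inequality of the corollary.

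Next I would treat the case $m$ odd with $n \geq m \geq 3$. Corollary~\ref{cor:paths} now gives $pw(K_m \by K_n) = \lceil \tfrac{m}{2} \rceil n - 1$, and another application of Lemma~\ref{lem:x}(b) produces
\[
\big\lceil \tfrac{m}{2} \big\rceil n - 1 \;\leq\; s(K_m \by K_n) \;\leq\; \big\lceil \tfrac{m}{2} \big\rceil n + 1,
\]
which is the second displayed inequality. This completes the argument.

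Since every inequality in the statement is simply the pathwidth value shifted by $0$ on one side and by $2$ on the other, the only nontrivial work lies upstream, namely in Corollary~\ref{cor:paths} (whose two halves are themselves Theorems~\ref{lem:lowerpw}, \ref{thm:low}, \ref{thm:up}, and \ref{thm:odd}) and in the $\pm 1$ gap between $s(G)$ and $vs(G)=pw(G)$ used to derive Lemma~\ref{lem:x}(b). The only thing worth flagging is a boundary check when $m=2$ (and therefore $m$ is even): the lower bound $\tfrac{m}{2}n + \tfrac{m}{2} - 1 = n$ follows from Lemma~\ref{lem:k2}, which is also subsumed into Corollary~\ref{cor:paths}, so no separate case analysis is required.
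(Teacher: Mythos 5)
Your proposal is correct and matches the paper's own derivation: the authors likewise obtain this corollary by substituting the exact pathwidth value from Corollary~\ref{cor:paths} into the chain $pw(K_m \by K_n) \leq s(K_m \by K_n) \leq pw(K_m \by K_n)+2$ of Lemma~\ref{lem:x}(b). Your boundary remark about $m=2$ being covered by Lemma~\ref{lem:k2} via Corollary~\ref{cor:paths} is also consistent with the paper.
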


\smallskip

 \begin{corollary}\label{end} For $|V(H)|\geq |V(G)|\geq 4$, where the clique numbers of graphs $G$ and $H$ are $m$ and $n$ respectively, \begin{displaymath}s(G \by H) \geq \begin{cases} \frac{m}{2}n + \frac{m}{2} -1 & \text{ if $m$ even} \\ \lceil \frac{m}{2} \rceil n -1 & \text{ if $m$ odd.}\end{cases}\end{displaymath}  \end{corollary}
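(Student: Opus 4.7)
The result is an immediate assembly of results already established in the paper, so the plan is essentially a chaining argument. First I would invoke Lemma~\ref{lem:x}(a), which gives directly
\begin{displaymath}
s(G \by H) \;\geq\; pw(K_{\omega(G)} \by K_{\omega(H)}) \;=\; pw(K_m \by K_n).
\end{displaymath}
This reduces the problem to evaluating the pathwidth of a product of two cliques of orders $m$ and $n$.

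Next I would apply Corollary~\ref{cor:paths} to get the value of $pw(K_m \by K_n)$ exactly. Corollary~\ref{cor:paths} is stated under the convention that the first parameter does not exceed the second, so I need to handle the mild discrepancy that the hypothesis of the present corollary only states $|V(H)| \geq |V(G)| \geq 4$ and not $\omega(H) \geq \omega(G)$. This is resolved by the commutativity of the Cartesian product: since $K_m \by K_n \cong K_n \by K_m$, we have $pw(K_m \by K_n) = pw(K_n \by K_m)$, so without loss of generality I may relabel so that $n \geq m$. Then Corollary~\ref{cor:paths} evaluates the pathwidth as $\frac{m}{2}n + \frac{m}{2} - 1$ when $m$ is even and $\lceil \frac{m}{2}\rceil n - 1$ when $m$ is odd, which is exactly the claimed lower bound.

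There is essentially no hard step: all the substantive work has already been done upstream. The minor-monotonicity of $s$ observed in~\cite{YDA} (together with the fact that $K_{\omega(G)}$ is a minor of $G$) is what powers Corollary~\ref{cor:1} and in turn Lemma~\ref{lem:x}(a), and the bulk of the effort was expended in Theorems~\ref{lem:lowerpw} and~\ref{thm:low} to pin down the lower bound on $pw(K_m \by K_n)$. The role of the hypothesis $|V(H)| \geq |V(G)| \geq 4$ is only to ensure that the clique-product lower bounds of Section~\ref{sec:lower} can be applied meaningfully; the assembly itself is a one-line deduction from Lemma~\ref{lem:x}(a) composed with Corollary~\ref{cor:paths}.
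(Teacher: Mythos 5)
Your proposal is correct and is essentially identical to the paper's own proof, which states only that the result ``follows directly from Corollary~\ref{cor:1}(a), Lemma~\ref{lem:x}, and Corollary~\ref{cor:paths}''; chaining $s(G \by H) \geq pw(K_{\omega(G)} \by K_{\omega(H)})$ with the exact value of $pw(K_m \by K_n)$ is precisely the intended one-line deduction. Your extra remark about needing $n \geq m$ to invoke Corollary~\ref{cor:paths} is a fair point of care that the paper glosses over (note only that after relabelling, the parity case in the displayed formula must be taken on the smaller clique number, which the corollary's statement implicitly assumes is $m = \omega(G)$).
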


\nocite{*}
\bibliographystyle{abbrvnat}
\bibliography{TheBib}
\label{sec:biblio}

\end{document}